\newtheorem{teor}{Theorem}
\newtheorem{lemma}{Lemma}
\newtheorem{cor}{Corollary}
\newtheorem{prop}{Proposition}
\DeclareMathOperator{\sol}{sol}
\DeclareMathOperator{\cp}{cp}
\DeclareMathOperator{\Alt}{Alt}
\DeclareMathOperator{\Sym}{Sym}
\DeclareMathOperator{\Aut}{Aut}
\DeclareMathOperator{\PSL}{PSL}
\DeclareMathOperator{\PGL}{PGL}
\begin{document}

\title{On the number of cyclic subgroups of a finite group}
\date{}
\author{Martino Garonzi}
\address[Martino Garonzi]{Departamento de Matem\'atica, Universidade de Bras\'ilia, Campus Universit\'ario Darcy Ribeiro, Bras\'ilia-DF, 70910-900, Brazil}
\email{mgaronzi@gmail.com}
\author{Igor Lima}
\address[Igor Lima]{Universidade Federal de Goi\'{a}s, IMTec - Regional Catal\~{a}o, Av. Dr. Lamartine P. de Avelar, 1120 Setor Universit\'ario Catal\~{a}o-GO, 75704-020, Brazil}
\email{igor.matematico@gmail.com}

\begin{abstract}
Let $G$ be a finite group and let $c(G)$ be the number of cyclic subgroups of $G$. We study the function $\alpha(G) = c(G)/|G|$. We explore its basic properties and we point out a connection with the probability of commutation. For many families $\mathscr{F}$ of groups we characterize the groups $G \in \mathscr{F}$ for which $\alpha(G)$ is maximal and we classify the groups $G$ for which $\alpha(G) > 3/4$. We also study the number of cyclic subgroups of a direct power of a given group deducing an asymptotic result and we characterize the equality $\alpha(G) = \alpha(G/N)$ when $G/N$ is a symmetric group.
\end{abstract}

\maketitle

\section{Introduction}

In this paper all the groups we consider are finite. Let $c(G)$ be the number of cyclic subgroups of a group $G$ and $\alpha(G) := c(G)/|G|$. It is clear that $0 < \alpha(G) \leq 1$. Observe that every cyclic subgroup $\langle x \rangle$ of $G$ has $\varphi(o(x))$ generators, where $\varphi$ is Euler's totient function and $o(x)$ denotes the order of the element $x$, hence $$c(G) = \sum_{x \in G} \frac{1}{\varphi(o(x))}.$$ On a computational level this formula is probably best for computing $c(G)$ for arbitrary $G$, and it is what we employed to work out small groups using \cite{gap}. For $d$ a divisor of $|G|=n$ let $B_G(d)$ be the number of elements $x \in G$ such that $x^d=1$. Denote by $\mu$ the standard M\"obius function. In \cite{garpat} another formula is given (from Lemma 7 (2) choosing $A(d)$ the number of elements of $G$ of order $d$, $B(d)=B_G(d)$ and $(r,s)=(1,0)$), which is an easy application of the M\"obius inversion formula, the following:
\begin{equation} \label{moe}
c(G) = \sum_{d|n} \left( \sum_{i|n/d} \frac{\mu(i)}{\varphi(di)} \right) B_G(d).
\end{equation}
Using this formula in \cite[Corollary 13]{garpat} it was shown that ciclicity can be detected by the number of cyclic subgroups, more precisely that if $|G|=n$ then $c(G) \geq c(C_n)$ with equality if and only if $G \cong C_n$.

\ 

There is a connection between $\alpha(G)$ and the so-called ``commuting probability'' of $G$, denoted by $\cp(G)$, that is the probability that two random elements of $G$ commute (studied extensively in \cite{gurob}, which we crucially employ in our study). More specifically we prove that if $\alpha(G) \geq 1/2$ then $\cp(G) \geq (2 \alpha(G)-1)^2$. This implies that a group with many cyclic subgroups has big solvable radical and, if it is already solvable, it has big Fitting subgroup (see Section \ref{cpsection} for the details).

\ 

It is not hard to show that $\alpha(G) \leq \alpha(G/N)$ whenever $N$ is a normal subgroup of $G$, and if equality holds then $N$ must be an elementary abelian $2$-group. It is an interesting question to ask what can we say about $G$ if $\alpha(G)=\alpha(G/N)$ given some information on $G/N$. In this paper we characterize equality $\alpha(G) = \alpha(G/N)$ when $G/N$ is a symmetric group (Theorem \ref{sym}).

\ 

Given a family $\mathscr{F}$ of groups define $$\alpha_{\mathscr{F}} := \max \{\alpha(G)\ :\ G \in \mathscr{F}\},\hspace{1cm} m\mathscr{F} := \{G \in \mathscr{F}\ :\ \alpha(G) = \alpha_{\mathscr{F}}\}.$$We are interested in computing $\alpha_{\mathscr{F}}$ and $m\mathscr{F}$ for various families $\mathscr{F}$. In this paper we prove the following results.

\ 

\begin{enumerate}
\item If $\mathscr{F}$ is the family of all finite groups then $\alpha_{\mathscr{F}}=1$ and $m\mathscr{F}$ is the family of elementary abelian $2$-groups (by \ref{2a-1}).

\ 

\item If $\mathscr{F}$ is the family of non-abelian groups then $$\alpha_{\mathscr{F}} = 7/8 = \alpha(D_8)$$ and $m \mathscr{F}$ is the family of groups of the form ${C_2}^n \times D_8$ for some $n \geq 0$ (by Corollary \ref{nil}).

\ 

\item If $\mathscr{F}$ is the family of non-nilpotent groups then $$\alpha_{\mathscr{F}} = 5/6 = \alpha(S_3)$$ and $m \mathscr{F}$ is the family of groups of the form ${C_2}^n \times S_3$ for some $n \geq 0$ (by Corollary \ref{nil}).

\ 

\item If $\mathscr{F}$ is the family of non-solvable groups then $$\alpha_{\mathscr{F}} = 67/120 = \alpha(S_5)$$ and $m \mathscr{F}$ is the family of groups of the form ${C_2}^n \times S_5$ for some $n \geq 0$ (Theorem \ref{sol}).

\ 

\item If $\mathscr{F}$ is the family of non-supersolvable groups then $$\alpha_{\mathscr{F}} = 17/24 = \alpha(S_4)$$ and $m \mathscr{F}$ is the family of groups of the form ${C_2}^n \times S_4$ for some $n \geq 0$ (Theorem \ref{nonsupersol}).

\ 

\item If $p$ is an odd prime and $\mathscr{F}$ is the family of non-trivial groups of order divisible only by primes at least $p$ then $$\alpha_{\mathscr{F}} = 2/p = \alpha(C_p)$$ and $m \mathscr{F} = \{C_p\}$ (Proposition \ref{odd}).

\ 

\item If $p$ is an odd prime and $\mathscr{F}$ is the family of groups $G$ with $C_p$ as an epimorphic image then $$\alpha_{\mathscr{F}}=2/p=\alpha(C_p)$$ and $m \mathscr{F}$ is the family of groups which are the direct product of an elementary abelian $2$-group and a Frobenius group with $2$-elementary abelian Frobenius kernel and Frobenius complements of order $p$ (Proposition \ref{pder}).
\end{enumerate}

\ 

We also classify the groups $G$ with $\alpha(G) > 3/4$ (Theorem \ref{trequa}), proving in particular that $3/4$ is the largest non-trivial accumulation point of the set of numbers of the form $\alpha(G)$. An easy consequence of this (Corollary \ref{g-m}) is that if $G$ is not an elementary abelian $2$-group and it has $|G|-n$ cyclic subgroups then $|G| \leq 8n$. This extends and generalizes the results in \cite{tar}, as we show right after the corollary. We also give a formula for $\alpha(G)$ when $G$ is a nilpotent group (Theorem \ref{mnilp}) and we study $\alpha$ of a direct power (Theorem \ref{mpower}) proving that $G^n$ has roughly $|G^n|/\varphi(\exp(G))$ cyclic subgroups.

\section{Basic properties of $\alpha$}

In this section we prove some basic properties of the function $\alpha$.

\subsection{} \label{axb}

If $A$ and $B$ are finite groups of coprime orders then $c(A \times B) = c(A) c(B)$ and hence $\alpha(A \times B) = \alpha(A) \alpha(B)$. The proof of this is straightforward.

\subsection{} \label{2a-1}
Let $I(G)$ denote the number of elements $g \in G$ such that $g^2=1$. Then $$\alpha(G) \leq \frac{1}{2}+\frac{I(G)}{2|G|}, \hspace{2cm} \frac{I(G)}{|G|} \geq 2 \alpha(G)-1.$$ In particular $\alpha(G)=1$ if and only if $G$ is an elementary abelian $2$-group.

\begin{proof}
If $g \in G$ then $g^2=1$ if and only if $\varphi(o(g))=1$, so $$c(G) = \sum_{x \in G} \frac{1}{\varphi(o(x))} \leq I(G)+\frac{1}{2}(|G|-I(G)) = \frac{1}{2}(I(G)+|G|).$$This implies the result.
\end{proof}

\subsection{} \label{inbase} If $N \unlhd G$ then $\alpha(G) \leq \alpha(G/N)$. Moreover $\alpha(G) = \alpha(G/N)$ if and only if $\varphi(o(g)) = \varphi(o(gN))$ for every $g \in G$, where $o(gN)$ denotes the order of the element $gN$ in the group $G/N$.

\begin{proof}
If $a$ divides $b$ then $\varphi(a) \leq \varphi(b)$, therefore
\begin{align*}
c(G/N) & = \sum_{gN \in G/N} \frac{1}{\varphi(o(gN))} = \sum_{g \in G} \frac{1}{|N| \varphi(o(gN))} \geq \sum_{g \in G} \frac{1}{|N| \varphi(o(g))} = \frac{c(G)}{|N|}.
\end{align*}This implies the result.
\end{proof}

\subsection{} \label{el2} If $\alpha(G) = \alpha(G/N)$ then $N$ is an elementary abelian $2$-group.

\begin{proof}
If $n \in N$ then applying \ref{inbase} we have $\varphi(o(n)) = \varphi(o(nN)) = \varphi(o(N)) = \varphi(1)=1$ so $n^2=1$.
\end{proof}

\subsection{} \label{dirf} If $G$ is any finite group then $\alpha(G) = \alpha(G \times {C_2}^n)$ for all $n \geq 0$.

\begin{proof}
Choosing $N = \{1\} \times {C_2}^n$ gives $\varphi(o(x)) = \varphi(o(xN))$ for all $x \in G \times {C_2}^n$. The result follows from \ref{inbase}.
\end{proof}

\subsection{} \label{ln} If $\alpha(G) = \alpha(G/N)$ and $L \unlhd G$, $L \subseteq N$ then $\alpha(G) = \alpha(G/L)$.

\begin{proof}
Since $G/N$ is a quotient of $G/L$ we have $\alpha(G/N) = \alpha(G) \leq \alpha(G/L) \leq \alpha(G/N)$ by \ref{inbase} and the result follows.
\end{proof}

\subsection{} \label{kn} If $\alpha(G) = \alpha(G/N)$ and $K \leq G$ then $\alpha(K) = \alpha(K/K \cap N)$.

\begin{proof}
Let $R := K \cap N$. By \ref{inbase}, is enough to show that if $x \in K$ then $o(xR)=o(xN)$ (because then $\varphi(o(xR))=\varphi(o(xN))=\varphi(o(x))$). Let $a=o(xR)$ and $b=o(xN)$. Since $R \subseteq N$ we have $x^a \in N$ so $b \leq a$. On the other hand $x^b \in K \cap N = R$ so $a \leq b$. Therefore $a=b$.
\end{proof}

\subsection{} \label{nz} Suppose $\alpha(G) = \alpha(G/N)$. If $a \in G$ has order $2$ modulo $N$ then $a$ centralizes $N$, in particular if $G/N$ can be generated by elements of order $2$ then $N \subseteq Z(G)$.

\begin{proof}
We have $\varphi(o(a)) = \varphi(o(aN)) = \varphi(2) = 1$ by \ref{inbase}, and $a$ has order $2$ modulo $N$, so $o(a)=2$. If $n \in N$ then $\varphi(o(an)) = \varphi(o(anN)) = \varphi(o(aN)) = 1$ so $(an)^2=1$. This together with $a^2=n^2=1$ (by \ref{el2}) implies $an=na$. Recalling that $N$ is abelian (by \ref{el2}) we deduce that if $G/N$ can be generated by elements of order $2$ then $N \subseteq Z(G)$.
\end{proof}

\section{A characterization}

Observe that $C_3$ is a quotient of $A_4$ and $\alpha(C_3) = \alpha(A_4) = 2/3$, so it is not always the case that $\alpha(G) = \alpha(G/N)$ implies $G \cong N \times G/N$. We can characterize the groups such that $\alpha(G) = \alpha(G/N)$ when $G/N$ is a symmetric group, for the following two reasons: the symmetric groups can be generated by elements of order $2$ and their double covers are known.

\begin{teor} \label{sym}
Let $G$ be a group and $N$ a normal subgroup of $G$ such that $G/N$ is isomorphic to a symmetric group. If $\alpha(G) = \alpha(G/N)$ then $N$ is an elementary abelian $2$-group and it admits a normal complement in $G$, so that $G \cong N \times G/N$.
\end{teor}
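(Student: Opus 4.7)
The plan is to use the Coxeter presentation of $S_n$ to exhibit an explicit normal complement of $N$ in $G$, by lifting the standard generators of $S_n$ and adjusting those lifts so that the braid relations hold on the nose.

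First, by \ref{el2} the subgroup $N$ is an elementary abelian $2$-group, and since $S_n$ is generated by transpositions (involutions), \ref{nz} gives $N \subseteq Z(G)$. The crucial order observation from \ref{inbase} is: if $\bar g \in G/N$ is an involution (a product of disjoint transpositions in $S_n$) and $g \in G$ is any lift of $\bar g$, then $\varphi(o(g)) = \varphi(o(\bar g)) = 1$, which forces $o(g) \in \{1, 2\}$; since $g \notin N$ we conclude $o(g) = 2$.

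Now fix the Coxeter presentation
\[
S_n = \langle s_1, \ldots, s_{n-1} \mid s_i^2 = 1,\ (s_i s_{i+1})^3 = 1,\ (s_i s_j)^2 = 1 \text{ for } |i - j| \geq 2 \rangle,
\]
with $s_i = (i, i+1)$, and pick any lifts $\tilde s_i \in G$ of $s_i$. By the order observation, $\tilde s_i^2 = 1$; and for $|i-j| \geq 2$ the product $\tilde s_i \tilde s_j$ lifts the double transposition $s_i s_j$, so $(\tilde s_i \tilde s_j)^2 = 1$. The only possibly failing relations are the braid relations: for $1 \leq i \leq n-2$, set $n_i := (\tilde s_i \tilde s_{i+1})^3$; since $s_i s_{i+1}$ is a $3$-cycle, $n_i$ maps to $1$ in $S_n$, i.e.\ $n_i \in N$.

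The remaining task is to modify the lifts to arrange $n_i = 1$ for every $i$. Replacing $\tilde s_i$ by $\tilde s_i m$ with $m \in N$ preserves the relations $\tilde s_i^2 = 1$ and $(\tilde s_i \tilde s_j)^2 = 1$ (as $m$ is central with $m^2 = 1$), while multiplying each of $n_{i-1}$ and $n_i$ by $m$ (for the boundary cases $i=1$ or $i=n-1$, only $n_1$ or $n_{n-2}$ respectively is affected). Viewing $(n_1, \ldots, n_{n-2})$ as an element of $N^{n-2}$, the set of available adjustments contains all vectors of the forms $m e_1$, $m(e_{j-1}+e_j)$ for $2 \leq j \leq n-2$, and $m e_{n-2}$, with $m \in N$. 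A telescoping identity
\[
m e_j = m e_1 + m(e_1 + e_2) + m(e_2 + e_3) + \cdots + m(e_{j-1} + e_j)
\]
(using $2x = 0$ in $N$) shows that every coordinate can be modified independently, so we can make every $n_i = 1$. With such lifts, $H := \langle \tilde s_1, \ldots, \tilde s_{n-1}\rangle$ satisfies the full Coxeter presentation of $S_n$ and surjects onto $G/N = S_n$, hence $H \cong S_n$; counting gives $H \cap N = 1$, and combined with $HN = G$ and $N \subseteq Z(G)$ this yields $G = N \times H \cong N \times S_n$. The main technical point in this argument is precisely the linear-algebra observation that the flip vectors span $N^{n-2}$; the cases $n \leq 2$, where there are no braid relations, are immediate.
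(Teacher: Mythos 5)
Your proof is correct, and it takes a genuinely different route from the paper's. The paper argues by induction on $|G|$: after reducing (via \ref{el2}, \ref{nz}, \ref{ln}, \ref{kn}) to the case where $N$ is a central subgroup of order $2$ contained in the derived subgroup, it recognizes $G$ as a double cover of $S_m$ and invokes Schur's classification of these covers to produce an element of order $4$ lying over an involution, contradicting \ref{inbase}. You instead build the complement directly: lift the Coxeter generators, observe that \ref{inbase} forces the relations $\tilde s_i^2=1$ and $(\tilde s_i \tilde s_j)^2=1$ to hold on the nose (the latter because $s_is_j$ is itself an involution for $|i-j|\geq 2$), and then kill the braid-relation defect vector $(n_1,\dots,n_{n-2})\in N^{n-2}$ by central adjustments, the key point being that the vectors $e_1$, $e_{j-1}+e_j$, $e_{n-2}$ span $N^{n-2}$ over $\mathbb{F}_2$; von Dyck's theorem then identifies $\langle \tilde s_1,\dots,\tilde s_{n-1}\rangle$ with $S_n$ and an order count gives the splitting. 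Your argument is self-contained (no appeal to Schur's classification, no induction, no separate treatment of $m=4$) and handles an arbitrary elementary abelian $N$ in one pass, essentially showing the relevant central extension is trivial by an explicit cocycle correction; the paper's argument is shorter modulo the known presentations of the double covers and reuses reduction machinery developed for other purposes. Both hinge on the same basic input, namely that \ref{inbase} pins down the orders of lifts of involutions.
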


\begin{proof}
We prove the result by induction on the order of $G$. By \ref{el2}, $N$ is an elementary abelian $2$-group. Since $G/N \cong S_m$ can be generated by elements of order $2$, \ref{nz} implies that $N$ is central in $G$. If $m=2$ then the result follows from \ref{2a-1}, so suppose $m \geq 3$. Let $R \cong {C_2}^l$ be a minimal normal subgroup of $G$ contained in $N$. By \ref{ln} we have $\alpha(G/R)=\alpha(G)=\alpha(S_m)$ so by induction, since $G/N$ is a quotient of $G/R$, we have $G/R = {C_2}^l \times S_m$ for some $l \geq 0$. Let $K \unlhd G$ be the (normal) subgroup of $G$ such that $K/R = \{1\} \times S_m$. Observe that $K \cap N$ contains $R$, so $K \cap N/R$ is a normal $2$-subgroup of $K/R \cong S_m$. If $m \neq 4$ this implies that $K \cap N = R$ because $S_m$ in this case does not admit non-trivial normal $2$-subgroups (being $m \geq 3$). If $m=4$ and $K \cap N \neq R$ then $K \cap N/R$ is the Klein group, and $K/K \cap N \cong S_3$. However in this case \ref{inbase} and \ref{kn} imply that
\begin{align*}
17/24 & = \alpha(S_4) = \alpha(K/R) \geq \alpha(K) = \alpha(K/K \cap N) = \alpha(S_3) = 5/6,
\end{align*}
a contradiction. We deduce that $K \cap N = R$.

\ 

If $N \neq R$ then $|K| < |G|$ and $\alpha(K) = \alpha(K/R) = \alpha(S_m)$ by \ref{kn} (being $K \cap N = R$). By induction we deduce that $K \cong R \times S_m$. Set $M := \{1\} \times S_m \leq K$. Since $$S_m \cong G/N \geq KN/N \cong K/K \cap N = K/R \cong S_m$$we obtain $G=KN=MRN=MN$ so being $N$ central in $G$ and $N \cap M = N \cap K \cap M = R \cap M = \{1\}$ we deduce $G = N \times M \cong N \times G/N$. Assume now $N=R$, so $N$ is a minimal normal subgroup of $G$. Since $N$ is central, $|N|=2$ and actually $N = \langle z \rangle = Z(G)$ is the center of $G$ (being $G/N \cong S_m$ with $m \geq 3$). Suppose by contradiction that $G$ is not a direct product $C_2 \times S_m$. We claim that $N$ is contained in the derived subgroup of $G$. Indeed $G'$ is contained in the subgroup $T$ of $G$ such that $T \supseteq N$, $T/N \cong A_m$ (being $|G/T|=2$), so if $G'$ does not contain $N$ then $G'N/N$ is a nontrivial normal subgroup of $G/N \cong S_m$ containing the derived subgroup of $S_m$ (that is, $A_m$) hence $G'N = T$ therefore letting $\varepsilon \in G$ represent a fixed element of order $2$ of $G/N \cong S_m$ not belonging to $A_m$, $\varphi(o(\varepsilon)) = \varphi(2) = 1$ (by \ref{inbase}) hence $o(\varepsilon)=2$ implying that $G' \langle \varepsilon \rangle \cap N = \{1\}$ (otherwise $G' \langle \varepsilon \rangle \supseteq N$ implying that $G' \langle \varepsilon \rangle = G$ so $|G:G'|=2$ hence $G \cong N \times G'$, a contradiction) therefore $G' \langle \varepsilon \rangle \cong G' \langle \varepsilon \rangle N/N = G/N \cong S_m$; being $N$ the center of $G$ we deduce $G \cong C_2 \times S_m$, a contradiction. This implies that $N \subseteq G'$ so $G$ is a double cover of $S_m$ (that is, a stem extension of $S_m$ where the base normal subgroup has order $2$), and looking at the known presentations of the double covers of the symmetric group (classified by Schur, see for example \cite{schur}) we see that $z$ is a square in $G$, that is there exists $x \in G$ with $x^2=z$, so that $x$ has order $4$ and $xN$ has order $2$, contradicting $\varphi(o(x)) = \varphi(o(xN))$ (which is true by \ref{inbase}).
\end{proof}

\section{Nilpotent groups}

Let $G$ be a finite group. For $\ell$ a divisor of $|G|$ let $B_G(\ell)$ be the number of elements $g \in G$ with the property that $g^{\ell}=1$ and let $r_G(\ell)$ be the number of elements of $G$ of order $\ell$. It is worth mentioning the famous result by Frobenius that if $G$ is any group and $\ell$ divides $|G|$ then $\ell$ divides $B_G(\ell)$. The idea of the following result, which is a reformulation of formula (\ref{moe}) in the nilpotent case, is to give a formula for $c(G)$ when $G$ is a nilpotent group in terms of the numbers $B_G(d)$, that in general are reasonably easy to deal with (consider for example the case in which $G$ is abelian).

\begin{teor} \label{mnilp}
If $G$ is a nilpotent group of order $n$ then $c(G) = \sum_{d|n} B_G(d)/d$.
\end{teor}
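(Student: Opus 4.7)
My plan is to reduce to the case where $G$ is a $p$-group, using the fact that a nilpotent group is the direct product of its Sylow subgroups, and then verify the $p$-group case by a direct geometric-series computation.

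For the reduction, write $G = P_1 \times \cdots \times P_k$ with $|P_i| = p_i^{a_i}$. Iterated application of \ref{axb} gives $c(G) = \prod_i c(P_i)$. On the other side, every divisor $d \mid n$ factors uniquely as $d = \prod_i p_i^{b_i}$ with $0 \le b_i \le a_i$, and $(g_1,\ldots,g_k)^d = 1$ if and only if $g_i^{p_i^{b_i}} = 1$ for each $i$, so $B_G(d) = \prod_i B_{P_i}(p_i^{b_i})$. This gives the Sylow-wise factorization
\begin{equation*}
\sum_{d \mid n} \frac{B_G(d)}{d} \;=\; \prod_{i=1}^k \biggl( \sum_{b=0}^{a_i} \frac{B_{P_i}(p_i^b)}{p_i^b} \biggr),
\end{equation*}
so it suffices to prove the identity for a single $p$-group.

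For $|G|=p^a$, I would interchange summation via $B_G(p^j) = \sum_{i=0}^j r_G(p^i)$ to obtain
\begin{equation*}
\sum_{j=0}^a \frac{B_G(p^j)}{p^j} \;=\; \sum_{i=0}^a r_G(p^i) \sum_{j=i}^a \frac{1}{p^j},
\end{equation*}
and compare against $c(G) = \sum_{i=0}^a r_G(p^i)/\varphi(p^i)$. The elementary identity $\sum_{j=i}^{\infty} 1/p^j = 1/\varphi(p^i)$ (for $i \ge 1$) tells us that the truncated tail differs from $1/\varphi(p^i)$ by exactly $-1/(p^a(p-1))$; handling $i=0$ by a separate direct computation, the total discrepancy between the two expressions collapses to $\bigl((p^a-1) - \sum_{i=1}^a r_G(p^i)\bigr)/(p^a(p-1))$, which vanishes because $\sum_{i=1}^a r_G(p^i)$ counts the nontrivial elements of $G$ and so equals $p^a-1$.

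The whole argument is essentially bookkeeping, so I do not foresee a genuine obstacle. The step that most benefits from care is the reduction, where one verifies that the bijection $d \leftrightarrow (p_i^{b_i})_i$ simultaneously respects the multiplicativity of $B_G$ and the splitting $d = \prod_i p_i^{b_i}$, so that the product of the per-Sylow identities is exactly the identity claimed for $G$. Note that nilpotence is used precisely here: without it one cannot factor $B_G(d)$ in this clean way, and one is forced back to the full Möbius expression (\ref{moe}).
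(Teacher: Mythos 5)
Your proposal is correct and follows essentially the same route as the paper: reduce to a Sylow $p$-subgroup via \ref{axb} and the multiplicativity of $B_G$ on coprime divisors, then verify the $p$-group identity using the relation between $B_G(p^j)$ and $r_G(p^j)$. The only cosmetic difference is that you interchange the double sum starting from $\sum_j B_G(p^j)/p^j$ and evaluate a truncated geometric series, whereas the paper substitutes $r_G(p^j)=B_G(p^j)-B_G(p^{j-1})$ into $c(G)$ and telescopes in the other direction.
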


\begin{proof}
Assume first that $G$ is a $p$-group, $|G|=p^n$. Since $r_G(p^j)=B_G(p^j)-B_G(p^{j-1})$ whenever $j \geq 1$ we see that
\begin{align*}
c(G) & = \sum_{x \in G} \frac{1}{\varphi(o(x))} = \sum_{j=0}^n \frac{r_G(p^j)}{\varphi(p^j)} = 1+\sum_{j=1}^{n} \frac{B_G(p^j)-B_G(p^{j-1})}{\varphi(p^j)} \\ & = 1-\frac{B_G(1)}{\varphi(p)}+\sum_{j=1}^{n-1} \left( \frac{1}{\varphi(p^j)}-\frac{1}{\varphi(p^{j+1})} \right) B_G(p^j)+\frac{B_G(p^n)}{\varphi(p^n)} = \sum_{j=0}^n \frac{B_G(p^j)}{p^j}.
\end{align*}
Now consider the general case, and write the order of $G$ as $n=|G|={p_1}^{n_1} \cdots {p_t}^{n_t}$ with the $p_i$'s pairwise distinct primes, $G$ is a direct product $\prod_{i=1}^t G_{p_i}$ where $G_{p_i}$ is the unique Sylow $p_i$-subgroup of $G$. Using \ref{axb} we obtain $$c(G) = \prod_{i=1}^t c(G_{p_i}) = \prod_{i=1}^t \left( \sum_{j=0}^n \frac{B_G(p_i^j)}{p_i^j} \right) = \sum_{d|n} B_G(d)/d.$$The last equality follows from the fact that since $G$ is nilpotent $B_G(ab)$ equals $B_G(a)B_G(b)$ if $a$ and $b$ are coprime divisors of $n$.
\end{proof}

\section{An asymptotic result}

We want to study $\alpha(G^n)$ where $G^n = G \times G \times \cdots \times G$ ($n$ times) in terms of the functions $B_G$ and $r_G$ defined in the previous section. Recall that the exponent of a group $G$, denoted $\exp(G)$, is the least common multiple of the orders of the elements of $G$. It is clear that $\exp(G^n)=\exp(G)$. The following result shows that $G^n$ has roughly $|G^n|/\varphi(\exp(G))$ cyclic subgroups.

\begin{teor} \label{mpower}
Let $G$ be a finite group. Then $\lim_{n \to \infty} \alpha(G^n) = 1/\varphi(\exp(G))$.
\end{teor}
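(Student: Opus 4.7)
My plan is to exploit the formula $c(G^n) = \sum_{x \in G^n} 1/\varphi(o(x))$ together with the key observation that $\exp(G^n) = \exp(G) =: e$, so $o(x) \mid e$ for every $x \in G^n$. Combined with the standard fact that $d \mid m$ implies $\varphi(d) \leq \varphi(m)$, this yields $1/\varphi(o(x)) \geq 1/\varphi(e)$ for every $x \in G^n$, hence the easy lower bound $\alpha(G^n) \geq 1/\varphi(e)$, valid for all $n$.

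For the matching upper bound I would let $N_d$ denote the number of $x \in G^n$ of order exactly $d$ and rewrite
$$\alpha(G^n) = \frac{1}{\varphi(e)} + \sum_{d \mid e,\, d < e} \frac{N_d}{|G|^n}\left(\frac{1}{\varphi(d)} - \frac{1}{\varphi(e)}\right) \leq \frac{1}{\varphi(e)} + \left(1 - \frac{N_e}{|G|^n}\right),$$
using that each parenthesized difference is bounded by $1$ and that $\sum_{d < e} N_d = |G|^n - N_e$. The problem thereby reduces to proving that $N_e/|G|^n \to 1$, i.e., that a uniformly random tuple in $G^n$ achieves order exactly $\exp(G)$ with probability tending to $1$.

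To verify this, I would factor $e = \prod_{p \mid e} p^{a_p}$ and, for each such prime $p$, set $G_p := \{g \in G : p^{a_p} \mid o(g)\}$ and $\pi_p := |G_p|/|G|$. Since $p^{a_p}$ is the exact $p$-part of $\exp(G)$, at least one element of $G$ has its $p$-part equal to $p^{a_p}$, so $G_p \neq \emptyset$ and $\pi_p > 0$. A tuple $(x_1, \ldots, x_n) \in G^n$ has order $e$ if and only if, for every prime $p \mid e$, at least one coordinate lies in $G_p$; the union bound applied to the complementary events then gives
$$1 - \frac{N_e}{|G|^n} \leq \sum_{p \mid e} (1-\pi_p)^n \longrightarrow 0 \quad \text{as } n \to \infty.$$
Together with the lower bound this yields $\alpha(G^n) \to 1/\varphi(e)$.

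The argument is essentially routine; the only conceptual point, which I do not expect to cause real difficulty, is recognizing that the deviation $\alpha(G^n) - 1/\varphi(e)$ is dominated by the probability that some prime-power obstruction prevents $o(x)$ from reaching the exponent, and that this probability decays geometrically in $n$ because each $\pi_p$ is strictly positive.
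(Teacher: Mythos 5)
Your proposal is correct and follows essentially the same route as the paper: both arguments reduce the problem to showing that the proportion of elements of $G^n$ of order exactly $\exp(G)$ tends to $1$, and both establish this via the same union bound over the primes $p$ dividing $\exp(G)$ (your quantity $(1-\pi_p)^n$ is precisely the paper's $(B_G(\exp(G)/p)/|G|)^n$). The only cosmetic difference is that you sandwich $\alpha(G^n)$ between $1/\varphi(\exp(G))$ and $1/\varphi(\exp(G)) + (1 - N_{\exp(G)}/|G|^n)$, whereas the paper computes the limit of $r_{G^n}(\ell)/|G|^n$ separately for each divisor $\ell$.
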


\begin{proof}
Observe that $r_{G^n}(\ell) \neq 0$ only if $\ell$ divides $|G|$, therefore $$\alpha(G^n) = \frac{1}{|G^n|} \sum_{x \in G^n} \frac{1}{\varphi(o(x))} = \sum_{\ell | |G|^n} \frac{r_{G^n}(\ell)}{\varphi(\ell)|G|^n} = \sum_{\ell | |G|} \frac{r_{G^n}(\ell)}{\varphi(\ell)|G|^n}$$so what we need to compute is the limit $L_{\ell}$ of $r_{G^n}(\ell)/|G|^n$ when $n \to \infty$, for $\ell$ a divisor of $|G|$. Clearly $r_{G^n}(\ell) \leq B_{G^n}(\ell) = B_G(\ell)^n$ hence $r_{G^n}(\ell)/|G|^n \leq (B_G(\ell)/|G|)^n$ so if $B_G(\ell) < |G|$ then $L_{\ell}=0$. Now assume $B_G(\ell)=|G|$, in other words $\exp(G)$ divides $\ell$. If $\exp(G) < \ell$ then $r_{G^n}(\ell)=0$ so $L_{\ell}=0$. Now assume $\exp(G) = \ell$. Let $p$ vary in the set of prime divisors of $|G|$, and for every such $p$ define $a_p := B_G(\exp(G)/p)$. Clearly $G^n$ has at least $|G|^n-\sum_p {a_p}^n$ elements of order $\exp(G)$. Observe that $a_p < |G|$ by definition of $\exp(G)$, so that $a_p/|G| < 1$, hence $(a_p/|G|)^n$ tends to $0$ as $n \to \infty$, implying $L_{\exp(G)}=1$. The result follows.
\end{proof}

\section{A connection with the probability of commutation} \label{cpsection}

The probability that two elements in a group $G$ commute is denoted by $\cp(G)$ (``commuting probability'' of $G$) and is defined by $|S|/|G \times G|$ where $S$ is the set of pairs $(x,y) \in G \times G$ such that $xy=yx$. It is easy to show that $\cp(G)=k(G)/|G|$ where $k(G)$ is the number of conjugacy classes of $G$. This invariant was studied by many authors, but we refer mostly to \cite{gurob}.

\ 

Let $I(G)$ be the size of the set $\{x \in G\ :\ x^2=1\}$. The following lemma is easily deducible from Theorem 2J of \cite{bf}. It can also be proved character-theoretically using the Frobenius-Schur indicator.

\begin{lemma} \label{fs}
$I(G)^2 \leq k(G)|G|$, in other words $\cp(G) \geq (I(G)/|G|)^2$.
\end{lemma}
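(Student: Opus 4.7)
The plan is to use the Frobenius–Schur indicator approach suggested in the statement. Recall that for each complex irreducible character $\chi \in \mathrm{Irr}(G)$, the Frobenius–Schur indicator
\[
\nu(\chi) \;=\; \frac{1}{|G|}\sum_{g \in G}\chi(g^2)
\]
takes values in $\{-1,0,1\}$, according to whether $\chi$ is quaternionic, complex, or real-representable. The key classical identity of Frobenius and Schur expresses the number of involutions together with the identity as
\[
I(G) \;=\; \#\{g \in G : g^2 = 1\} \;=\; \sum_{\chi \in \mathrm{Irr}(G)} \nu(\chi)\,\chi(1).
\]

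First I would invoke this identity and use $|\nu(\chi)| \leq 1$ to get the pointwise bound
\[
I(G) \;\leq\; \sum_{\chi \in \mathrm{Irr}(G)} \chi(1).
\]
Next, I would apply the Cauchy–Schwarz inequality to the right-hand side, regarded as an inner product of the all-ones vector with the vector of degrees indexed by $\mathrm{Irr}(G)$:
\[
\left(\sum_{\chi \in \mathrm{Irr}(G)} \chi(1)\right)^2 \;\leq\; |\mathrm{Irr}(G)| \cdot \sum_{\chi \in \mathrm{Irr}(G)} \chi(1)^2.
\]
Finally, I would combine this with the two standard identities $|\mathrm{Irr}(G)| = k(G)$ and $\sum_{\chi \in \mathrm{Irr}(G)} \chi(1)^2 = |G|$ to conclude
\[
I(G)^2 \;\leq\; k(G)\,|G|,
\]
and divide both sides by $|G|^2$, using $\mathrm{cp}(G) = k(G)/|G|$, to obtain $\mathrm{cp}(G) \geq (I(G)/|G|)^2$.

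There is no real obstacle here: the only thing to be careful about is the sign issue in the Frobenius–Schur formula, where one uses $|\nu(\chi)| \leq 1$ rather than $\nu(\chi)=1$ to preserve the inequality direction. The Cauchy–Schwarz step is tight precisely when all $\chi(1)$ are equal, which almost never occurs, so in principle one could extract a stronger inequality, but that is not needed for the stated result.
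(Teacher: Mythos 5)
Your proof is correct and is precisely the character-theoretic argument via the Frobenius--Schur indicator that the paper itself points to as a proof of this lemma (the paper gives no details, citing Brauer--Fowler's Theorem 2J and remarking that the Frobenius--Schur route also works). The chain $I(G)=\sum_\chi \nu(\chi)\chi(1)\leq\sum_\chi\chi(1)$ followed by Cauchy--Schwarz against $k(G)=|\mathrm{Irr}(G)|$ and $\sum_\chi\chi(1)^2=|G|$ is exactly the intended argument, so nothing further is needed.
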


This together with \ref{2a-1} implies the following inequality.

\begin{lemma} \label{alphacp}
If $\alpha(G) \geq 1/2$ then $\cp(G) \geq (2\alpha(G)-1)^2$.
\end{lemma}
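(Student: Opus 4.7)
The plan is to chain the two preceding results: the upper bound on $\alpha(G)$ in \ref{2a-1} and the lower bound on $\cp(G)$ in Lemma \ref{fs}. Both involve the quantity $I(G)/|G|$, so the natural move is to use it as a bridge between $\alpha(G)$ and $\cp(G)$.

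First I would rewrite \ref{2a-1} in the form $I(G)/|G| \geq 2\alpha(G) - 1$. The hypothesis $\alpha(G) \geq 1/2$ is exactly what makes the right-hand side nonnegative, which is essential for the next step. Since $I(G)/|G|$ is manifestly nonnegative as well, both sides of this inequality lie in $[0,1]$, and therefore squaring preserves the inequality:
\[
\left(\frac{I(G)}{|G|}\right)^{\!2} \geq (2\alpha(G) - 1)^2.
\]
Then applying Lemma \ref{fs} gives $\cp(G) \geq (I(G)/|G|)^2 \geq (2\alpha(G) - 1)^2$, as desired.

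There is essentially no obstacle here; the only subtlety worth flagging is the role of the hypothesis $\alpha(G) \geq 1/2$, which is used solely to justify squaring a potentially-signed quantity. Without it, $2\alpha(G) - 1$ could be negative and the conclusion $\cp(G) \geq (2\alpha(G)-1)^2$ would still be nontrivial but would no longer follow from the chain $\cp(G) \geq (I(G)/|G|)^2 \geq (2\alpha(G)-1)^2$, since the second inequality could fail.
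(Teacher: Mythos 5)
Your argument is exactly the one the paper intends: the paper derives the lemma by combining \ref{2a-1} (which gives $I(G)/|G| \geq 2\alpha(G)-1$) with Lemma \ref{fs}, and your chain of inequalities, including the observation that $\alpha(G) \geq 1/2$ is needed to justify the squaring step, fills in precisely the details left implicit there. The proposal is correct and takes essentially the same approach.
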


Let us include some other results from \cite{gurob} that we will need in the following section.

\subsection{} \label{solgurob} If $G$ is a non-solvable group and $\sol(G)$ is the maximal normal solvable subgroup of $G$ then $\cp(G) \leq |G:\sol(G)|^{-1/2}$. This follows from \cite[Theorem 9]{gurob} (which depends on the classification of the finite simple groups), and together with Lemma \ref{alphacp} implies that if $\alpha(G) > 1/2$ then $|G:\sol(G)| \leq (2\alpha(G)-1)^{-4}$.

\subsection{} \label{fitgurob} If $G$ is a solvable group and $F(G)$ is the Fitting subgroup of $G$ then $\cp(G) \leq |G:F(G)|^{-1/2}$. This follows from \cite[Theorem 4]{gurob}, and together with Lemma \ref{alphacp} implies that if $\alpha(G) > 1/2$ then $|G:F(G)| \leq (2\alpha(G)-1)^{-4}$.

\section{Non-solvable groups}

\begin{teor} \label{sol}
Let $G$ be a finite non-solvable group. Then $\alpha(G) \leq \alpha(S_5)$ with equality if and only if $G \cong S_5 \times {C_2}^n$ for some integer $n \geq 0$.
\end{teor}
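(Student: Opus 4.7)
The plan is to assume $\alpha(G)\ge 67/120$, reduce to a finite computational check via the probabilistic bounds of Section \ref{cpsection}, and then invoke Theorem \ref{sym} to pin down the equality case.

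First I would pass to $H:=G/\sol(G)$. By \ref{inbase} we have $\alpha(H)\ge \alpha(G)\ge 67/120>1/2$, so \ref{solgurob} applies and yields
\[
|H|\;=\;|G:\sol(G)|\;\le\;(2\alpha(G)-1)^{-4}\;\le\;(7/60)^{-4}\;<\;5400.
\]
Since $\sol(H)=1$, the socle $\soc(H)=T_1\times\cdots\times T_k$ is a direct product of non-abelian simple groups, each of order at least $60$, and $H$ embeds into $\Aut(\soc(H))$. In particular $k\le 2$, and if $k=2$ then $T_1=T_2=A_5$.

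Next I would cut down the almost simple possibilities. Using Lemma \ref{fs} backwards, $\alpha(H)\ge 67/120$ forces $\cp(H)\ge (7/60)^2=49/3600$, which in turn forces $\cp(T_i)\ge 49/3600$ (since $\cp$ is submultiplicative across direct factors in an obvious sense). A glance at the character tables shows that among non-abelian simple groups of order at most $5400$, only $A_5,\ \PSL(2,7),\ A_6,\ \PSL(2,8)$ meet this commuting-probability threshold; the remaining candidates $\PSL(2,11),\PSL(2,13),A_7,\PSL(2,17),\PSL(2,19),\PSL(2,16)$ are ruled out immediately by $k(T)/|T|<49/3600$. It then suffices to examine the finite list of almost simple groups with socle in $\{A_5,\PSL(2,7),A_6,\PSL(2,8)\}$, together with the socle-diagonal cases having $\soc(H)=A_5\times A_5$. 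For each of these at most a dozen groups I would compute $c(H)$ directly from formula (\ref{moe}) (or equivalently from $c(H)=\sum_{x\in H}1/\varphi(o(x))$), verifying that $\alpha(H)\le 67/120$ in every case, with equality exactly when $H\cong S_5$.

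Finally, with $\alpha(G)\le \alpha(H)\le 67/120$ in hand, suppose equality holds. Then $\alpha(G)=\alpha(G/\sol(G))$ with $G/\sol(G)\cong S_5$, so Theorem \ref{sym} (applied with $N=\sol(G)$) gives that $\sol(G)$ is an elementary abelian $2$-group admitting a normal complement, i.e.\ $G\cong C_2^{\,n}\times S_5$. The converse is immediate from \ref{dirf}. The main obstacle is the middle paragraph: the enumeration of almost simple (and near-simple) groups of order at most $5400$ and the verification of $\alpha(H)\le 67/120$ on that list is elementary but tedious, and in practice one carries it out with \cite{gap} rather than by hand; the conceptual ingredients (Lemma \ref{alphacp}, \ref{solgurob}, and Theorem \ref{sym}) handle everything else.
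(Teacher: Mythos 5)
Your proposal is correct and follows essentially the same route as the paper: the Guralnick--Robinson bound gives $|G/\sol(G)|\le 5397$ with trivial solvable radical, one shows this quotient is almost simple (or $A_5\times A_5$), a finite \cite{gap}-assisted check leaves only $S_5$, and Theorem \ref{sym} handles the equality case. Your extra filter --- using $\cp(H)\le\cp(\soc(H))$ and the threshold $49/3600$ to discard every socle except $A_5$, $\PSL(2,7)$, $A_6$ and $\PSL(2,8)$ before enumerating almost simple groups --- is a harmless shortening of the computational step (provided you justify the monotonicity of $\cp$ under passage to the normal subgroup $\soc(H)$, e.g.\ via $\cp(H)\le\cp(N)\cp(H/N)$ from \cite{gurob}); the paper instead tabulates all $21$ almost simple groups of order at most $5397$ directly.
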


\begin{proof}
Let $\alpha := \alpha(S_5) = 67/120$. We will show that if $G$ is any finite non-solvable group such that $\alpha(G) \geq \alpha$ then $G \cong S_5 \times {C_2}^n$ for some $n \geq 0$. Assume $\alpha(G) \geq \alpha$, in particular $\alpha(G) > 1/2$. By \ref{solgurob} we deduce $|G/\sol(G)| \leq (2 \alpha-1)^{-4} = (60/7)^4 < 5398$ thus $|G/\sol(G)| \leq 5397$. Observe that $G/\sol(G)$ is non-trivial (being $G$ non-solvable), it does not have non-trivial solvable normal subgroups and $\alpha(G/\sol(G)) \geq \alpha(G) \geq \alpha$. If we can show that $G/\sol(G) \cong S_5$ it will follow that $67/120 = \alpha \leq \alpha(G) \leq \alpha(G/\sol(G)) = \alpha(S_5) = 67/120$ therefore $\alpha(G) = \alpha(G/\sol(G))$ and the result follows from Theorem \ref{sym}.

\begin{table}
\begin{tabular}{|l|l|l|l|l|}
\hline
$G$                                                                      & $\Aut(G)$                & $|G|$ & Float($\alpha(G)$) & $\alpha(G)$ \\ \hline
$\Alt(5)$ & $\Sym(5)$                & $60$         & $0.533333$           & $8/15$        \\ \hline
$\Sym(5)$                                                                 & $\Sym(5)$                & $120$        & $0.558333$           & $67/120$      \\ \hline
$\Alt(6)$                                    & $\mbox{P}\Gamma \mbox{L}(2,9)$              & $360$        & $0.463889$           & $167/360$     \\ \hline
$\PGL(2,9)$                                                               & $\mbox{P}\Gamma \mbox{L}(2,9)$              & $720$        & $0.394444$           & $71/180$      \\ \hline
$\Sym(6)$                                            & $\mbox{P}\Gamma \mbox{L}(2,9)$              & $720$        & $0.502778$           & $181/360$     \\ \hline
$\mbox{M}_{10}$                                                                 & $\mbox{P}\Gamma \mbox{L}(2,9)$              & $720$        & $0.419444$           & $151/360$     \\ \hline
$\mbox{P}\Gamma \mbox{L}(2,9)$                                                               & $\mbox{P}\Gamma \mbox{L}(2,9)$              & $1440$       & $0.426389$           & $307/720$     \\ \hline
$\Alt(7)$                                                                 & $\Sym(7)$                & $2520$       & $0.375794$           & $947/2520$    \\ \hline
$\Sym(7)$                                                                 & $\Sym(7)$                & $5040$       & $0.404563$           & $2039/5040$   \\ \hline
$\PSL(3,2)$                                                    & $\PGL(2,7)$              & $168$        & $0.470238$           & $79/168$      \\ \hline
$\PGL(2,7)$                                                    & $\PGL(2,7)$              & $336$        & $0.464286$           & $13/28$       \\ \hline
$\PSL(2,8)$                                                    & $\mbox{P}\Gamma \mbox{L}(2,8)$              & $504$        & $0.309524$           & $13/42$       \\ \hline
$\mbox{P}\Gamma \mbox{L}(2,8)$                                                    & $\mbox{P}\Gamma \mbox{L}(2,8)$              & $1512$       & $0.362434$           & $137/378$     \\ \hline
$\PSL(2,11)$                                                  & $\PGL(2,11)$             & $660$        & $0.369697$           & $61/165$      \\ \hline
$\PGL(2,11)$                                                  & $\PGL(2,11)$             & $1320$       & $0.368182$           & $81/220$      \\ \hline
$\PSL(2,13)$                                                  & $\PGL(2,13)$             & $1092$       & $0.335165$           & $61/182$      \\ \hline
$\PGL(2,13)$                                                  & $\PGL(2,13)$             & $2184$       & $0.322344$           & $88/273$      \\ \hline
$\PSL(2,17)$                                                  & $\PGL(2,17)$             & $2448$       & $0.306373$           & $125/408$     \\ \hline
$\PGL(2,17)$                                                  & $\PGL(2,17)$             & $4896$       & $0.26777$            & $437/1632$    \\ \hline
$\PSL(2,19)$                                                  & $\PGL(2,19)$             & $3420$       & $0.267251$           & $457/1710$    \\ \hline
$\PSL(2,16)$                                                  & $\mbox{P}\Gamma \mbox{L}(2,16)$ & $4080$       & $0.192157$           & $49/255$      \\ \hline
\end{tabular}\caption{Almost-simple groups of order at most $5397$}
\end{table}

We are left to show that if $G$ is a group without non-trivial solvable normal subgroups and such that $|G| \leq 5397$ and $\alpha(G) \geq \alpha(S_5) = 67/120$, then $G \cong S_5$. Let $N$ be a minimal normal subgroup of $G$, then $N=S^t$ with $S$ a non-abelian simple group. If $t \geq 2$ then being $|S| \geq 60$ and $|G| \leq 5397$ we deduce $G=N=A_5 \times A_5$, contradicting the minimality of $N$. So $t=1$. We claim that there is no other minimal normal subgroup of $G$. Indeed if $M$ is a minimal normal subgroup of $G$ distinct from $N$ then $M$ is non-solvable (by assumption) so $|G:MN| = |G|/|MN| \leq 5397/60^2 < 2$ (the smallest order of a non-solvable group is $60$) so $G=MN$ and actually $G = M \times N = A_5 \times A_5$ ($M$ is a direct power of a non-abelian simple group, the smallest orders of non-abelian simple groups are $60$, $168$ and $60 \cdot 60^2,60 \cdot 168$ are both larger than $5397$) which is a contradiction because $\alpha(A_5 \times A_5) = 77/225 < \alpha(S_5)$. We deduce that $N$ is the unique minimal normal subgroup of $G$. Since $N$ is non-solvable, it is non-abelian, so it is not contained in $C_G(N)$, hence $C_G(N)$ must be trivial (otherwise it would contain a minimal normal subgroup of $G$ distinct from $N$) therefore $G$ is almost-simple. Using \cite{atlasas} and \cite{gap} we computed the list of almost-simple groups $G$ of size at most $5397$ and for each of them we determined $\alpha(G)$. The results are summarized in the above table. We deduce that the only almost-simple group $G$ with $|G| \leq 5397$ and $\alpha(G) \geq 67/120$ is $G = S_5$.
\end{proof}

This together with Lemma \ref{alphacp} and \ref{fitgurob} implies the following.

\begin{cor}
If $\alpha(G) > \alpha(S_5)$ then $G$ is solvable and the Fitting subgroup of $G$ has index at most $5397$.
\end{cor}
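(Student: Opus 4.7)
The statement is essentially a direct consequence of three ingredients already developed in the paper: Theorem \ref{sol}, Lemma \ref{alphacp}, and the bound \ref{fitgurob}. The plan is to chain them together and do one numerical estimate at the end.

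First I would handle solvability. Theorem \ref{sol} asserts that every non-solvable group $G$ satisfies $\alpha(G)\leq\alpha(S_5)=67/120$. Hence the hypothesis $\alpha(G)>\alpha(S_5)$ immediately excludes the non-solvable case, and $G$ must be solvable.

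Next, since $\alpha(G)>67/120>1/2$, Lemma \ref{alphacp} applies and yields $\cp(G)\geq(2\alpha(G)-1)^2$. On the other hand, now that $G$ is solvable, the inequality in \ref{fitgurob} supplies $\cp(G)\leq|G:F(G)|^{-1/2}$. Combining these two bounds gives
\[
(2\alpha(G)-1)^2 \leq |G:F(G)|^{-1/2}, \qquad \text{i.e.,}\qquad |G:F(G)| \leq (2\alpha(G)-1)^{-4}.
\]

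Finally, I would plug in the hypothesis $\alpha(G)>67/120$, which forces $2\alpha(G)-1>7/60$, and therefore
\[
|G:F(G)| < (7/60)^{-4} = (60/7)^4 = 12960000/2401 < 5398,
\]
so $|G:F(G)|\leq 5397$, as claimed. There is no real obstacle; the whole proof is a one-line assembly of Theorem \ref{sol}, Lemma \ref{alphacp} and \ref{fitgurob} followed by a trivial arithmetic check of $(60/7)^4$. The only thing to watch is that the strict inequality $\alpha(G)>\alpha(S_5)$ (not merely $\geq$) is what lets the final rational bound round down to an integer $\leq 5397$.
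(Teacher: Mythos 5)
Your proof is correct and is exactly the paper's argument: the corollary is stated as an immediate consequence of Theorem \ref{sol}, Lemma \ref{alphacp} and \ref{fitgurob}, with the same numerical check $(60/7)^4 < 5398$. (One tiny quibble: the strict inequality $\alpha(G) > \alpha(S_5)$ is really needed to rule out the non-solvable case, since $S_5$ itself attains $67/120$; the integer bound $5397$ would follow from the arithmetic even with $\geq$.)
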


\section{Groups with many cyclic subgroups}

In this section we will study groups with $\alpha(G)$ ``large'', specifically, we will classify all the finite groups $G$ such that $\alpha(G) > 3/4$. This is a natural choice because $3/4$ turns out to be the largest non-trivial accumulation point of the set of numbers of the form $\alpha(G)$. To do such classification the idea is to observe that if $\alpha(G) > 3/4$ then $I(G)/|G| > 1/2$ and use Wall's classification \cite[Section 7]{wall}.

\begin{teor} \label{trequa}
Let $X$ be a group with $\alpha(X) > 3/4$. Then $X$ is a direct product of an elementary abelian $2$-group with a group $G$, $\alpha(X) = \alpha(G)$, $G$ does not have $C_2$ as a direct factor, and either $G$ is trivial (in which case $\alpha(X)=1$) or one of the following occurs.
\begin{enumerate}
\item Case I. $G \cong A \rtimes \langle \varepsilon \rangle$, where $\langle \varepsilon \rangle = C_2$ acts on $A$ by inversion and there exists an integer $n \geq 1$ such that one of the following occurs. $$A={C_3}^n,\ \alpha(G)=\frac{3 \cdot 3^n+1}{4 \cdot 3^n} \hspace{.5cm} \mbox{or} \hspace{.5cm} A={C_4}^n,\ \alpha(G)=\frac{3 \cdot 2^n+1}{4 \cdot 2^{n}}.$$
\item Case II. $G \cong D_8 \times D_8$ and $\alpha(G) = 25/32$.
\item Case III. $G$ is a quotient ${D_8}^r/N$ where $N = \{(a_1,\ldots,a_r) \in {Z(D_8)}^r\ :\ a_1 \cdots a_r = 1\}$ and $$\alpha(G) = \frac{3 \cdot 2^r+1}{4 \cdot 2^{r}}.$$
\item Case IV. $G$ is a semidirect product $V \rtimes \langle c \rangle$ where $V={\mathbb{F}_2}^{2r}$ has a basis $\{x_1,y_1,\ldots,x_r,y_r\}$, $c$ has order $2$, it acts trivially on each $y_i$, $x_i^c = cx_ic = [c,x_i]x_i = x_iy_i$ for $i=1,\ldots,r$, and $$\alpha(G) = \frac{3 \cdot 2^r+1}{4 \cdot 2^{r}}.$$
\end{enumerate}
\end{teor}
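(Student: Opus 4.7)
The plan is to reduce to Wall's classification of finite groups in which strictly more than half of the elements have order dividing $2$, and then compute $\alpha$ explicitly for each group in his list. As a preliminary normalization, I would use \ref{dirf} to write $X \cong G \times {C_2}^k$ with $k$ maximal, so that $G$ has no direct factor isomorphic to $C_2$ and $\alpha(X) = \alpha(G) > 3/4$. Then \ref{2a-1} gives
$$\frac{I(G)}{|G|} \geq 2\alpha(G) - 1 > \frac{1}{2},$$
so strictly more than half of the elements of $G$ are involutions (including the identity), and $G$ falls under the scope of \cite[Section 7]{wall}.

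I would then invoke Wall's classification to extract a finite list of structural possibilities for $G$ and match these, up to the ${C_2}^k$ direct factor already removed, against the four families in the statement. Wall's list, restricted to groups without $C_2$ as a direct factor, collects precisely into the trivial group and the families of Cases I--IV. This settles the structural part and reduces the theorem to checking the claimed value of $\alpha(G)$ in each case (and confirming each such value exceeds $3/4$).

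For the computations I would use $c(G) = \sum_{x \in G} 1/\varphi(o(x))$ by enumerating element orders in each family. In Case I, since $\varepsilon$ inverts $A$ one has $(a\varepsilon)^2 = aa^{-1}=1$, so the coset $A\varepsilon$ consists entirely of involutions, and counting elements of $A = {C_3}^n$ (no elements of order $4$) or of $A = {C_4}^n$ (with $4^n-2^n$ elements of order $4$) yields the stated formulas. In Case II the element orders of $D_8 \times D_8$ are tallied directly from those of $D_8$. In Case III, the subgroup $N$ has order $2^{r-1}$ and identifies elements whose central components multiply to the identity, so one must track which cyclic subgroups of order $4$ collapse in the quotient. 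In Case IV the key identity is $(cv)^2 = v^c v$; writing $v = \sum_i(\lambda_i x_i + \mu_i y_i)$ with $\lambda_i,\mu_i \in \mathbb{F}_2$, one computes $v^c v = \sum_i \lambda_i y_i$, so exactly $2^r$ elements of $cV$ (those with all $\lambda_i=0$) are involutions and the remaining $2^{2r}-2^r$ have order $4$. Plugging these counts into the formula for $c(G)$ produces the four stated expressions.

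The main obstacle is the translation between Wall's original presentation of the groups and the forms given in Cases III and IV. In particular, one has to recognise that Wall's central-quotient family is exactly ${D_8}^r/N$ with the specific $N$ described, and that his semi-direct construction coincides with the one with commutator relation $[c,x_i]=y_i$ claimed in Case IV; only after this identification is made do the element-order counts above apply directly. One must also verify that after the translation no spurious $C_2$ direct factor has been absorbed into the $G$ part of the decomposition, which would otherwise duplicate groups across the families. Once that matching is complete, the remainder of the proof is a mechanical tally of elements of each order, plugged into $c(G) = \sum 1/\varphi(o(x))$.
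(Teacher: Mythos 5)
Your overall strategy is exactly the paper's: normalize away the ${C_2}^k$ factor via \ref{dirf}, use \ref{2a-1} to get $I(G)/|G| > 1/2$, invoke Wall's classification, and then count element orders. Cases II, III and IV are handled essentially as in the paper (your identity $(cv)^2 = v^c v$ in Case IV and the count of $2^r$ fixed vectors is precisely the paper's computation, and your Case III sketch is completable along the paper's lines).

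However, there is a genuine gap in Case I. You assert that Wall's list, restricted to groups without $C_2$ as a direct factor, ``collects precisely into'' the four families of the statement, so that the structural part is settled and only the $\alpha$-computations remain. That is false for Wall's Case I: there the classification only gives $G = A \rtimes \langle \varepsilon \rangle$ with $A$ an \emph{arbitrary} abelian group and every element of $G \setminus A$ an involution (i.e.\ a generalized dihedral group). For instance $D_{10} = C_5 \rtimes C_2$ lies in Wall's Case I and has no $C_2$ direct factor, yet $\alpha(D_{10}) = 7/10 \le 3/4$, so it must be excluded --- and excluding it is part of what the theorem asserts. The restriction of $A$ to ${C_3}^n$ or ${C_4}^n$ does not come from Wall; it has to be extracted from the hypothesis $\alpha(G) > 3/4$. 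The paper does this by noting $c(G) = c(A) + |G|/2$, hence $\alpha(G) = \tfrac12\alpha(A) + \tfrac12 > 3/4$ forces $\alpha(A) > 1/2$; then, since $\alpha(A) \le \alpha(A/N)$ for quotients, any prime $p$ dividing $|A|$ satisfies $2/p = \alpha(C_p) > 1/2$, so $p \in \{2,3\}$, and ruling out $C_9$, $C_8$ and $C_{12}$ as quotients (each has $\alpha \le 1/2$) pins $A$ down to ${C_3}^n$ or ${C_4}^n$. Without an argument of this kind your Case I is incomplete, and as written your claim about what Wall's list yields is incorrect.
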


\begin{proof}
We know by \ref{dirf} that $\alpha(X) = \alpha(G)$. Also, we may assume that $G$ is non-trivial. Since $\alpha(G) > 3/4$, by \ref{2a-1} we have $I(G)/|G| \geq 2 \alpha(G)-1 > 1/2$ so $G$ appears in Wall's classification \cite[Section 7]{wall}. Case II is immediate, we will treat cases I, III and IV.

\ 

\textsc{Case I of Wall's classification}. $G$ is a semidirect product $A \rtimes \langle \varepsilon \rangle$ with $A$ an abelian group, $\langle \varepsilon \rangle \cong C_2$ and every element of $G-A$ has order $2$. Observe that $A$ does not admit $C_2$ as a direct factor. Indeed if $a \in A$ then since $a \varepsilon \not \in A$, $a \varepsilon$ has order $2$ so $a^{\varepsilon} = a^{-1}$, hence $\varepsilon$ acts on $A$ as inversion and a direct factor of order $2$ in $A$ would yield a direct factor of order $2$ in $G$. It follows that $c(G) = c(A)+|G|/2$, so that $3/4 < \alpha(G) = \alpha(A)/2+1/2$ implying $\alpha(A) > 1/2$. If the prime $p$ divides the order of $A$ then $C_p$ is a quotient of $A$ so $1/2 < \alpha(A) \leq \alpha(C_p) = 2/p$ whence $p \leq 3$, that is, $p$ is either $2$ or $3$. Write $A = P_2 \times P_3$ where $P_2$ is an abelian $2$-group and $P_3$ is an abelian $3$-group. Observe that $C_9$ is not a quotient of $A$ because otherwise $1/2 < \alpha(A) \leq \alpha(C_9) = 1/3$, a contradiction. Therefore if $P_2$ is trivial then $A = {C_3}^n$ for some $n \geq 1$, an easy computation shows $\alpha(A) = \frac{3^n+1}{2 \cdot 3^n}$, and the result follows. Suppose now that $P_2$ is non-trivial. If $P_3$ is non-trivial then since $P_2$ is not elementary abelian (because $A$ does not have $C_2$ as a direct factor) there is a quotient of $A$ isomorphic to $C_{12}$, however $1/2 < \alpha(A) \leq \alpha(C_{12}) = 1/2$ gives a contradiction. So $P_3 = \{1\}$, in other words $A$ is an abelian $2$-group and we may write $A = \prod_{i=1}^n {C_{2^{a_i}}}$. Since $A$ does not have $C_2$ as a direct factor we deduce $a_i \geq 2$ for all $i$, on the other hand if one of the $a_i$'s is at least $3$ then $C_8$ is a quotient of $A$ but $1/2 < \alpha(A) \leq \alpha(C_8) = 1/2$ is a contradiction. So $A \cong {C_4}^n$ hence an easy computation shows $\alpha(A) = \frac{2^n+1}{2 \cdot 2^n}$, and the result follows.

\ 

\textsc{Case III of Wall's classification}. $G$ is a direct product of $D_8$'s with the centers amalgamated. $G=G(r)$ has a presentation $$G(r) = \langle c,x_1,y_1,\ldots,x_r,y_r\ :\ c^2=x_i^2=y_i^2=1,$$ $$\mbox{all pairs of generators commute except } [x_i,y_i]=c \rangle.$$A more practical description of the group in question is $G = {D_8}^r/N$ where $N = \{(z_1,\ldots,z_r) \in Z^r\ :\ z_1 \cdots z_r=1\}$ where $Z=\langle z \rangle$ (cyclic of order $2$) is the center of $D_8$. $N$ is a subgroup of $Z({D_8}^r)=Z^r$ of index $2$, so $|G|=2 \cdot 4^r$. An element $(a_1,\ldots,a_r)N \in G$ squares to $1$ if and only if $(a_1^2,\ldots,a_r^2) \in N$, that is, $a_1^2 \cdots a_r^2 = 1$. Observe that every $a_i^2$ is either $1$ or $z$, so this condition means that there are an even number of indeces $i$ such that $a_i^2=z$. Since $D_8$ contains $6$ elements that square to $1$ and $2$ elements that square to $z$, ${D_8}^r$ contains exactly $$\beta_r = \sum_{k=0}^{[r/2]} \binom{r}{2k} 2^{2k} 6^{r-2k}$$ elements $(a_1,\ldots,a_r)$ such that $a_1^2 \cdots a_r^2 = 1$. Hence $G$ contains exactly $\beta_r/2^{r-1}$ elements that square to $1$. Observe that $$8^r = (2+6)^r = \sum_{h=0}^r \binom{r}{h} 2^h 6^{r-h}, \hspace{1cm} 4^r = (-2+6)^r = \sum_{h=0}^r \binom{r}{h} (-1)^h 2^h 6^{r-h}$$so adding them together gives exactly $2 \beta_r$. This means that $\beta_r = \frac{1}{2} (8^r+4^r)$, so $G$ has exactly $\beta_r/2^{r-1} = 4^r+2^r$ elements that square to $1$ and exactly $|G|-\beta_r/2^{r-1} = 2 \cdot 4^r-4^r-2^r = 4^r-2^r$ elements of order $4$. Therefore $$\alpha(G) = \frac{1}{2 \cdot 4^r} \left(4^r+2^r+\frac{1}{2} (4^r-2^r) \right) = \frac{3 \cdot 2^r+1}{4 \cdot 2^{r}}.$$

\ 

\textsc{Case IV of Wall's classification}. $G=G(r)$ has a presentation $$G(r) = \langle c,x_1,y_1,\ldots,x_r,y_r\ :\ c^2=x_i^2=y_i^2=1,$$ $$\mbox{all pairs of generators commute except } [c,x_i]=y_i \rangle.$$ A more practical description of the group in question is $G \cong V \rtimes \langle c \rangle$ where $V={C_2}^{2r} = \langle x_1,y_1,\ldots,x_r,y_r \rangle$, $c$ has order $2$, it acts trivially on each $y_i$ and $x_i^c = cx_ic = [c,x_i]x_i = x_iy_i$ for $i=1,\ldots,r$. Thinking of $V$ as a vector space over $\mathbb{F}_2$, if $v \in V$ then $vc$ has order $2$ or $4$, and it has order $4$ exactly when $v^c \neq v$. Observe that with respect to the given basis of $V$ the operator $c$ (acting from the right) has a diagonal block matrix form with $J = \left( \begin{array}{cc} 1 & 1 \\ 0 & 1 \end{array} \right)$ on each diagonal block entry. Thus there are precisely $2^r$ vectors $v$ with $v^c=v$, they are of the form $(0,b_1,0,b_2,\ldots,0,b_r)$. Therefore $G$ has $2^{2r}-2^r$ elements of order $4$ and $c(G) = 2^{2r}+2^r+(2^{2r}-2^r)/2$. This implies that $\alpha(G) = \frac{3 \cdot 2^r+1}{4 \cdot 2^{r}}$.
\end{proof}

The following corollary is immediate. It implies that if $G$ is a non-nilpotent group then $\alpha(G) \leq 5/6$ with equality if and only if $G$ is a direct product ${C_2}^n \times S_3$.

\begin{cor} \label{nil}
Let $G$ be a group such that $\alpha(G) \geq 5/6 = \alpha(S_3)$. Then either
\begin{enumerate}
\item $G \cong {C_2}^n \times S_3$ for some $n \geq 0$ and $\alpha(G)=5/6$, or
\item $G \cong {C_2}^n \times D_8$ for some $n \geq 0$ and $\alpha(G)=7/8$, or
\item $G \cong {C_2}^n$ for some $n \geq 0$ and $\alpha(G)=1$.
\end{enumerate}
\end{cor}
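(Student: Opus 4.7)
The plan is to apply Theorem~\ref{trequa} directly. Since $\alpha(G) \geq 5/6 > 3/4$, we may write $G = C_2^n \times H$ with $\alpha(G) = \alpha(H)$, where either $H$ is trivial (giving case (3) of the corollary) or $H$ has no $C_2$ direct factor and falls into one of Cases I--IV of the theorem. The task reduces to sifting through these four cases and retaining only the parameter values for which $\alpha(H) \geq 5/6$.

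The key observation is that in every case the value $\alpha(H)$ is either an isolated constant or an expression of the form $3/4 + 1/(4 m^k)$ with $m \in \{2, 3\}$ and $k$ an integer parameter. Such expressions are strictly decreasing in $k$, so it suffices to inspect the smallest admissible parameter and to check that all larger values drop $\alpha(H)$ strictly below $5/6$.

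Carrying out the case analysis: in Case I with $A = C_3^n$, $\alpha(H) = (3 \cdot 3^n+1)/(4 \cdot 3^n)$ equals $5/6$ at $n=1$ (giving $H \cong C_3 \rtimes C_2 \cong S_3$) and is bounded above by $3/4 + 1/36 = 7/9 < 5/6$ for $n \geq 2$. In Case I with $A = C_4^n$, $\alpha(H) = (3 \cdot 2^n+1)/(4 \cdot 2^n)$ equals $7/8$ at $n=1$ (giving $H \cong C_4 \rtimes C_2 \cong D_8$) and equals $13/16 < 5/6$ at $n=2$, smaller thereafter. Case II contributes $\alpha(H) = 25/32 < 5/6$ and is discarded. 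Cases III and IV both yield $\alpha(H) = (3 \cdot 2^r+1)/(4 \cdot 2^r)$: at $r=1$ both constructions collapse to $D_8$ (in Case III the amalgamating kernel $N$ is trivial; in Case IV a quick inspection of the eight elements shows the non-abelian group has five involutions and two elements of order $4$, hence is $D_8$), so nothing new arises, while $r \geq 2$ gives at most $13/16 < 5/6$.

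Assembling, the only non-trivial possibilities for $H$ are $S_3$ and $D_8$, producing cases (1) and (2) respectively. There is no genuine obstacle beyond bookkeeping; the only mildly non-routine verification is that the $r=1$ incarnations of Cases III and IV both recover $D_8$ and do not introduce a spurious fifth class.
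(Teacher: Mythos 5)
Your proof is correct and follows exactly the route the paper intends: the paper states Corollary~\ref{nil} is immediate from Theorem~\ref{trequa}, and your case-by-case sifting of Cases I--IV (including the check that the $r=1$ instances of Cases III and IV collapse to $D_8$) is precisely that verification, with all the arithmetic correct.
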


We can deduce a bound of $|G|$ in terms of $|G|-c(G)$. The inequality $\alpha(G) \leq 7/8$ can be written as $|G| \leq 8(|G|-c(G))$, so we obtain the following.

\begin{cor} \label{g-m}
If $G$ is any finite group which is not an elementary abelian $2$-group then $|G| \leq 8(|G|-c(G))$ with equality if and only if $G \cong D_8 \times {C_2}^n$ for some non-negative integer $n$.
\end{cor}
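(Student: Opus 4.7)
The plan is to observe that the inequality $|G| \leq 8(|G|-c(G))$ is just an algebraic rewriting of $\alpha(G) \leq 7/8$: dividing by $|G|$ gives $1 \leq 8(1-\alpha(G))$, i.e.\ $\alpha(G) \leq 7/8$. Moreover the two inequalities are equivalent as equalities too, so everything reduces to proving that for $G$ not an elementary abelian $2$-group one has $\alpha(G) \leq 7/8$, with equality exactly for $G \cong D_8 \times {C_2}^n$.

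The second step is to read this off Corollary \ref{nil}. That corollary classifies all groups $G$ with $\alpha(G) \geq 5/6$ into three families: $G \cong {C_2}^n \times S_3$ (giving $\alpha=5/6$), $G \cong {C_2}^n \times D_8$ (giving $\alpha = 7/8$), and $G \cong {C_2}^n$ (giving $\alpha=1$). The third family is precisely the elementary abelian $2$-groups we are excluding. So if $G$ is not elementary abelian of exponent $2$, either $\alpha(G) < 5/6 < 7/8$, or $G$ lies in one of the first two families where $\alpha(G) \in \{5/6, 7/8\}$. In every case $\alpha(G) \leq 7/8$, and equality $\alpha(G)=7/8$ holds iff $G \cong D_8 \times {C_2}^n$ for some $n \geq 0$.

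There is no real obstacle here: the content is entirely Corollary \ref{nil}, and the only thing to check is the rewriting of the fraction $\alpha(G) \leq 7/8$ as the promised integer inequality $|G| \leq 8(|G|-c(G))$. The corollary is essentially a change of notation, as the author indicates by calling it ``immediate.''
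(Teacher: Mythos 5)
Your proposal is correct and matches the paper's argument exactly: the paper likewise obtains the corollary by rewriting $\alpha(G)\leq 7/8$ as $|G|\leq 8(|G|-c(G))$ and reading the bound and the equality case off Corollary \ref{nil}.
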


Observe that the above results extend and generalize the results in \cite{tar}. As an example of application let us determine the groups $G$ with $|G|-9$ cyclic subgroups. In this case we have $|G|-c(G)=9$ so $|G| \leq 72$ and a GAP check yields that $G$ is one of $C_{11}$, $D_{22}$ and $C_4 \times S_3$.

\section{Special families of groups}

\begin{prop} \label{odd}
Let $p \geq 3$ be a prime number. Let $G$ be a non-trivial group of order divisible only by primes at least $p$. Then $\alpha(G) \leq 2/p$ with equality if and only if $G \cong C_p$.

In particular if $G$ belongs to the family of groups of odd order then $\alpha(G) \leq 2/3$ with equality if and only if $G \cong C_3$.
\end{prop}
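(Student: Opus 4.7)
The plan is to use the formula $c(G) = \sum_{x \in G} 1/\varphi(o(x))$ combined with a lower bound on $\varphi(o(x))$ forced by the prime divisors of $|G|$, then optimize over the possible values of $|G|$.

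\textbf{Step 1: bound $\varphi(o(x))$ from below.} For every non-identity $x \in G$, every prime divisor of $o(x)$ divides $|G|$, hence is $\geq p$. If $q$ is such a prime divisor of $n = o(x)$ then $\varphi(n) \geq \varphi(q) = q-1 \geq p-1$, because $\varphi$ is multiplicative and the factor $\varphi(q^a) = q^{a-1}(q-1)$ dominates. Thus $1/\varphi(o(x)) \leq 1/(p-1)$ for all $x \neq 1$.

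\textbf{Step 2: sum up.} Separating the identity from the rest,
\begin{equation*}
c(G) = 1 + \sum_{x \neq 1} \frac{1}{\varphi(o(x))} \leq 1 + \frac{|G|-1}{p-1} = \frac{|G|+p-2}{p-1}.
\end{equation*}
Dividing by $|G|$ gives $\alpha(G) \leq (|G|+p-2)/((p-1)|G|)$. A routine rearrangement shows this is $\leq 2/p$ exactly when $(p-2)|G| \geq p(p-2)$, i.e.\ when $|G| \geq p$ (using $p \geq 3$ so $p-2 > 0$). Since $G$ is non-trivial and every prime dividing $|G|$ is $\geq p$, we have $|G| \geq p$, so $\alpha(G) \leq 2/p$.

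\textbf{Step 3: equality case.} If $\alpha(G) = 2/p$, then the chain of inequalities above must be tight. The rearrangement in Step 2 is an equality only when $|G| = p$, and the group of order $p$ is $C_p$. Conversely, $\alpha(C_p) = (1 + (p-1)/(p-1))/p = 2/p$, so equality holds exactly for $G \cong C_p$. The particular case $p=3$ gives the ``odd order'' statement. There is no real obstacle here; the main observation is simply that among integers whose prime factors are all $\geq p$, the minimum of $\varphi$ on non-ones is achieved uniquely at $n=p$.
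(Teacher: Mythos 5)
Your proof is correct and follows essentially the same route as the paper: bound $\varphi(o(x)) \geq p-1$ for $x \neq 1$ using the hypothesis on prime divisors, sum to get $c(G) \leq 1 + (|G|-1)/(p-1)$, and then use $|G| \geq p$ to conclude, with the equality case forcing $|G| = p$. The only difference is cosmetic algebraic rearrangement.
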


\begin{proof}
If $1 \neq x \in G$ and $q$ is a prime divisor of the order of $x$ then $p \leq q$ so $\varphi(o(x)) \geq \varphi(q) = q-1 \geq p-1$, so since $|G| \geq p$ we have
\begin{align*}
\alpha(G) & = \frac{1}{|G|} \sum_{x \in G} \frac{1}{\varphi(o(x))} \leq \frac{1}{|G|} \left( 1+\frac{|G|-1}{p-1} \right) \\ & = \frac{p-2}{|G|(p-1)}+\frac{1}{p-1} \leq \frac{p-2}{p(p-1)}+\frac{1}{p-1} = \frac{2}{p}.
\end{align*}
If equality holds the above inequalities are equalities and using $p \geq 3$ it is easy to deduce that $|G|=p$, that is, $G \cong C_p$.
\end{proof}

\begin{prop} \label{pder}
Let $G$ be a group and $p$ an odd prime, and suppose $G$ has $C_p$ as epimorphic image (in other words $p$ divides $|G/G'|$). Then $\alpha(G) \leq 2/p$ with equality if and only if $G$ is a direct product of an elementary abelian $2$-group with a Frobenius group with $2$-elementary abelian kernel and complements of order $p$.
\end{prop}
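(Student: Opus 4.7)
My plan is to extract a quotient $G/N \cong C_p$ from the hypothesis and apply \ref{inbase}: this gives $\alpha(G) \leq \alpha(G/N) = \alpha(C_p) = 2/p$ immediately. For the equality statement, suppose $\alpha(G) = 2/p$, so $\alpha(G) = \alpha(G/N)$ for this specific $N$, and \ref{inbase} yields $\varphi(o(g)) = \varphi(o(gN))$ for every $g \in G$. Then \ref{el2} shows $N$ is elementary abelian of exponent $2$. For $g \notin N$, $o(gN) = p$ gives $\varphi(o(g)) = p - 1$; writing $o(g) = p^a m$ with $\gcd(m,p) = 1$ one has $p^{a-1}(p-1)\varphi(m) = p-1$, which forces $a = 1$ and $m \in \{1,2\}$, hence $o(g) \in \{p, 2p\}$.

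From this I would read off the structure. A Sylow $p$-subgroup $P$ consists of $p$-elements, so $|P| = p$ and $P \cong C_p$. Since $\gcd(|N|, |P|) = 1$, Schur--Zassenhaus gives $G = N \rtimes P$. The coprime action of $P$ on the $\mathbb{F}_2$-vector space $N$ decomposes $N = C \oplus K$ by Maschke's theorem, where $C = C_N(P)$ is the sum of trivial constituents and $K = [N, P]$ is the sum of non-trivial constituents. Then $C$ commutes with $N$ (abelian) and with $P$, so $C \leq Z(G)$ and $G = C \times (K \rtimes P)$. Since $|P|$ is prime, every $1 \neq x \in P$ generates $P$, so $C_K(x) = C_K(P) = 0$; thus $K \rtimes P$ is a Frobenius group with elementary abelian $2$-kernel $K$ and complement $P$ of order $p$ (with the degenerate subcase $K = 0$ yielding the abelian group $C \times C_p$). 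Identifying $C \cong {C_2}^n$ gives the claimed direct product decomposition.

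For the converse, \ref{dirf} reduces the computation to $\alpha(F)$ for the Frobenius group $F = K \rtimes C_p$: in such an $F$ every element lies either in $K$ (with order in $\{1, 2\}$) or in one of the $|K|$ Sylow $p$-subgroups (with order $p$, since $N_F(P) = P$ in a Frobenius group), giving $c(F) = 1 + (|K|-1) + |K| = 2|K|$ and $|F| = p|K|$, hence $\alpha(F) = 2/p$.

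The main obstacle is the Maschke decomposition step: once element orders outside $N$ are pinned to $\{p, 2p\}$ and the semidirect structure $G = N \rtimes P$ is in place, the result follows cleanly from coprime-action theory, but one has to verify both that the fixed submodule $C$ splits off as a central direct factor and that $P$ acts fixed-point-freely on the complementary piece $K$ — and treat carefully the degenerate case $K = 0$ in order to fit the abelian groups ${C_2}^n \times C_p$ under the stated characterization.
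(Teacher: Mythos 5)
Your proof is correct and follows essentially the same route as the paper's: the bound comes from $\alpha(G)\le\alpha(G/N)=\alpha(C_p)=2/p$ via \ref{inbase}, equality forces $N$ to be elementary abelian of exponent $2$ by \ref{el2}, Maschke's theorem is the tool that splits off the central $2$-part, the order-$p$ subgroup acts fixed-point-freely on what remains giving the Frobenius structure, and the converse is the same element-order count yielding $c(F)=2|K|$. Your write-up is somewhat more explicit than the paper's (which strips off $C_2$ direct factors by observing that an element of order $2p$ produces one, rather than decomposing $N=C_N(P)\times[N,P]$ in a single step), and you are right to note the degenerate subcase $K=0$, i.e.\ $G\cong {C_2}^n\times C_p$, which the paper's statement tacitly subsumes under the Frobenius case.
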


Observe that a bound of $2/p$ when $p=2$ would be trivial. This is why we are only considering the odd case.

\begin{proof}
Since $C_p$ is a quotient of $G$ we have $\alpha(G) \leq \alpha(C_p) = 2/p$. Now assume equality holds. Then $\alpha(G) = \alpha(C_p)$ and there exists $N \unlhd G$ with $G/N \cong C_p$, so $N$ is an elementary abelian $2$-group by \ref{el2}, say $N \cong {C_2}^m$. If $G$ has elements of order $2p$ then it has $C_2$ as a direct factor (for example by Maschke's theorem), so now assume $G$ does not have elements of order $2p$. A subgroup of $G$ of order $p$ acts fixed point freely on $N$ so $G$ is a Frobenius group with Frobenius kernel equal to $N$ and Frobenius complement of order $p$. Now assume $G$ is a Frobenius group with $2$-elementary abelian kernel of size $2^m$ and Frobenius complement of order $p$. The element orders of $G$ are $1$, $2$, and $p$, and $G$ has precisely $2^m-1$ elements of order $2$ and $2^m(p-1)$ elements of order $p$. We have $$\alpha(G) = \frac{1}{|G|} \sum_{x \in G} \frac{1}{\varphi(o(x))} = \frac{1}{2^m \cdot p} \left( 2^m+\frac{2^m(p-1)}{p-1} \right) = 2/p.$$This concludes the proof.
\end{proof}

\section{Non-supersolvable groups}

Let $G$ be a solvable group and let $F_i$ be normal subgroups of $G$ defined as follows: $$F_0=\{1\}, \hspace{1cm} F_{i+1}/F_i := F(G/F_i) \hspace{.5cm} \forall i \geq 0,$$ where $F(G/F_i)$ denotes the Fitting subgroup of $G/F_i$. In particular $F_1$ is the Fitting subgroup of $G$. Since $G$ is solvable, there exists a minimal $h$ such that $F_h=G$, such $h$ is called the ``Fitting height'' of $G$. Observe that $$F(F_l/F_{i-1}) = F_i/F_{i-1} \hspace{1cm} \forall l \geq i \geq 1,$$ indeed $F_i/F_{i-1}$ is nilpotent and normal in $F_l/F_{i-1}$ hence $F_i/F_{i-1} \subseteq F(F_l/F_{i-1})$, and $F(F_l/F_{i-1})$ is nilpotent and characteristic in $F_l/F_{i-1}$, which is normal in $G/F_{i-1}$, so $F(F_l/F_{i-1})$ is normal in $G/F_{i-1}$ hence $F(F_l/F_{i-1}) \subseteq F(G/F_{i-1}) = F_i/F_{i-1}$. This implies in particular that $F_l/F_i$ has Fitting height $l-i$.

\ 

The following consequence of the solution of the $k(GV)$ problem is proved in \cite[Lemma 3, proof of (i)]{gurob} (see also \cite{knorr}).

\begin{prop} \label{kgvcor}
Let $G$ be a group and let $F$ be the Fitting subgroup of $G$. If $G/F$ is nilpotent (that is, $G$ has Fitting height $2$) then $k(G) \leq |F|$, so that $$\cp(G) \leq \frac{1}{|G:F|}.$$
\end{prop}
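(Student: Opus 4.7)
My plan is to deduce this from the $k(GV)$-theorem, which asserts that whenever a finite group $H$ acts faithfully and coprimely on a finite abelian group $V$, the semidirect product satisfies $k(V \rtimes H) \leq |V|$. The nilpotent case we need was established by Kn\"orr before the general resolution of the $k(GV)$ problem, so in principle the argument does not require the classification of finite simple groups. First I would use that $G/F$ nilpotent implies $G$ solvable, hence $C_G(F) \subseteq F$, so $G/F$ embeds into $\Aut(F)$ as a nilpotent group of automorphisms preserving the Sylow decomposition $F = \prod_p F_p$.

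Second, I would reduce to the case in which each $F_p$ is elementary abelian. This is done by an induction on $|G|$, passing to the Frattini quotient: since $\Phi(F)$ is characteristic in $F$ and hence normal in $G$, the quotient $G/\Phi(F)$ still has Fitting height at most $2$ (its Fitting quotient is a quotient of $G/F$, hence nilpotent), so the inductive hypothesis applies to $G/\Phi(F)$; combining it with Gallagher's inequality $k(G) \leq k(N)\,k(G/N)$ in the case $N = \Phi(F)$ collapses the problem to the base case $\Phi(F) = 1$, in which $F$ is a direct sum of elementary abelian $p$-groups.

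Third, in the reduced situation I would apply the $k(GV)$-theorem prime by prime. For each $p$, the quotient $G/C_G(F_p)$ is a nilpotent subgroup of $\Aut(F_p) = \mathrm{GL}(F_p)$; its Hall $p'$-part $H_{p'}$ acts coprimely on $F_p$, and $k(GV)$ gives $k(F_p \rtimes H_{p'}) \leq |F_p|$. A Clifford-theoretic summation over $G$-orbits on $\mathrm{Irr}(F)$ patches these estimates across primes, using the nilpotency of $G/F$ to handle the $p$-part of the action on $F_p$, and yields $k(G) \leq \prod_p |F_p| = |F|$. Dividing by $|G|$ gives $\cp(G) \leq 1/|G:F|$. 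The main obstacle is the bookkeeping in this prime-by-prime patching, particularly the treatment of the non-coprime $p$-part where Kn\"orr's nilpotent-case argument is essential; the $k(GV)$-theorem itself we use as a black box, exactly as in Lemma 3 of gurob.
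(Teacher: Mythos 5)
First, a point of comparison: the paper does not actually prove this proposition --- it is quoted from \cite[Lemma 3, proof of (i)]{gurob} (with a pointer to \cite{knorr}), so there is no internal argument to measure your sketch against. Your overall strategy (Frattini reduction via $k(G) \leq k(N)\,k(G/N)$, followed by coprime $k(GV)$-type bounds) is indeed the strategy of the cited proof, and your first two steps are essentially sound: the reduction is legitimate because $\Phi(F) \leq \Phi(G)$ together with Gasch\"utz's theorem gives $F(G/\Phi(F)) = F/\Phi(F)$, so induction yields $k(G) \leq k(\Phi(F))\,k(G/\Phi(F)) \leq |\Phi(F)| \cdot |F/\Phi(F)| = |F|$.

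Your third step, however, contains a genuine gap and, as written, a false intermediate claim. The inequality $k(F_p \rtimes H_{p'}) \leq |F_p|$ does not follow from the $k(GV)$-theorem unless $H_{p'}$ acts \emph{faithfully} on $F_p$, which is neither established nor true in general: $H_{p'}$ acts faithfully on all of $F$, but its action on the single factor $F_p$ may have a kernel. For instance, in $G = S_3 \times D_{10}$ one has $F = C_3 \times C_5$, $H_{3'} = C_2 \times C_2$, and $k(F_3 \rtimes H_{3'}) = k(S_3 \times C_2) = 6 > 3 = |F_3|$. More seriously, the passage from the per-prime estimates to $k(G) \leq |F|$ --- what you call ``patching'' and ``bookkeeping'' --- is the entire content of the result and is not carried out. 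Any complete argument along these lines needs at least (a) the Clifford-theoretic identity $k(F \rtimes H) = \sum_{\lambda} k(H_\lambda)$, summed over representatives $\lambda$ of the orbits of $H$ on $\mathrm{Irr}(F)$ (valid here because $F$ is complemented, so each $\lambda$ extends to its inertia group), and (b) the structural fact that the Sylow $p$-subgroup of $H = G/F$ acts faithfully on $F_{p'}$ (otherwise $O_p(G)$ would properly contain $F_p$, contradicting $F = F(G)$), which is what allows the non-coprime part of the action to be absorbed into a genuinely coprime $k(GV)$-instance. Neither ingredient appears in your sketch, and you yourself flag this step as ``the main obstacle,'' so as a self-contained proof the proposal is incomplete; as a gloss on the citation it is consistent with what the paper does, namely outsource the entire proof to \cite{gurob} and \cite{knorr}.
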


The following result shows that $S_4$ is a ``maximal'' non-supersolvable group in terms of $\alpha(G)$.

\begin{teor} \label{nonsupersol}
Let $G$ be a group. If $G$ is not supersolvable then $\alpha(G) \leq \alpha(S_4)$ with equality if and only if $G \cong {C_2}^n \times S_4$ for some non-negative integer $n$.
\end{teor}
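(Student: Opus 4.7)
I would argue by induction on $|G|$, combining the structural reductions of Theorems~\ref{sol} and~\ref{sym}. First, since $\alpha(S_5)=67/120<17/24$, Theorem~\ref{sol} shows any $G$ with $\alpha(G)\geq 17/24$ is solvable, so one may fix a non-supersolvable solvable $G$ with $\alpha(G)\geq 17/24$ and aim to prove $G\cong C_2^n\times S_4$. The strategy splits into two cases according to whether $G$ admits a cyclic minimal normal subgroup.

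\emph{Case 1: $G$ has a cyclic minimal normal subgroup $V\cong C_p$.} The class of supersolvable groups is closed under prime-order cyclic normal extensions (refine any chief series of $G/V$ by inserting $1\lhd V$), so $G/V$ is non-supersolvable, and by induction $\alpha(G/V)\leq 17/24$; hence $\alpha(G)\leq 17/24$ by~\ref{inbase}. When $\alpha(G)=17/24$,~\ref{el2} forces $V\cong C_2$ and induction gives $G/V\cong C_2^n\times S_4$. Let $M\unlhd G$ be the preimage of the $C_2^n$ direct factor of $G/V$, so $G/M\cong S_4$ and $\alpha(G/M)=\alpha(S_4)=17/24=\alpha(G)$; then Theorem~\ref{sym} applies directly and yields a normal complement to $M$ in $G$ isomorphic to $S_4$, whence $G\cong M\times S_4\cong C_2^{n+1}\times S_4$.

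\emph{Case 2: every minimal normal subgroup of $G$ is non-cyclic.} Pick one, $V\cong C_p^d$ with $d\geq 2$. Lemma~\ref{alphacp} gives $\cp(G)\geq (5/12)^2=25/144$, whence~\ref{fitgurob} yields $|G:F(G)|\leq (12/5)^4<34$ (and Proposition~\ref{kgvcor} sharpens this to $|G:F|\leq 5$ whenever $G$ has Fitting height~$2$). Since $G/C_G(V)$ embeds into $\mathrm{GL}_d(\mathbb{F}_p)$ as a faithful irreducible subgroup, the combination of this action, the bound on $|G:F|$, and the constraint from~\ref{2a-1} that $I(G)/|G|\geq 5/12$ (forcing most elements of $G$ to be involutions and hence severely restricting element orders, which in the equality case must lie in $\{1,2,3,4,6\}$) should leave only the configuration $(p,d)=(2,2)$ with $G/C_G(V)\cong\mathrm{GL}_2(\mathbb{F}_2)\cong S_3$ and $C_G(V)=V$, i.e.\ the natural $S_4=V_4\rtimes S_3$ structure.

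\emph{Main obstacle.} The harder step is the explicit verification in Case~2: among the finitely many candidate groups (non-cyclic minimal normal $V=C_p^d$ inside a solvable $G$ with $|G:F(G)|\leq 33$, subject to the element-order restriction), one must rule out every alternative action besides $S_3$ acting naturally on $V_4$, and every non-trivial enlargement of $C_G(V)$ beyond $V$. I expect this to be handled either by a Clifford-style estimate that bounds the contribution to $c(G)$ coming from $V$, $C_G(V)/V$, and $G/C_G(V)$ separately, or by a finite enumeration aided by GAP in the spirit of the almost-simple table in the proof of Theorem~\ref{sol}, showing $\alpha(G)<17/24$ in every configuration except $V_4\rtimes S_3$.
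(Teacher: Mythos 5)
Your Case 1 is correct and is a nice inductive reduction that the paper does not use: closure of supersolvability under cyclic normal extensions, \ref{inbase}, \ref{el2} and Theorem \ref{sym} do dispose of any cyclic minimal normal subgroup. The genuine gap is Case 2, which is where all the content of the theorem lives; as written it is a plan (``should leave only the configuration \ldots'', ``I expect this to be handled \ldots'') rather than an argument. Two concrete problems. First, the element-order restriction you invoke is unjustified: $I(G)/|G|\geq 5/12$ says nothing about the orders of the remaining elements, and you cannot assume orders lie in $\{1,2,3,4,6\}$ before the inequality $\alpha(G)\leq 17/24$ is even established. Second, the proposed finite enumeration is not finite: the bound $|G:F(G)|\leq 33$ leaves $F(G)$, and hence $C_G(V)$, of unbounded order, so there are infinitely many candidate groups and no GAP search in the spirit of the table in Theorem \ref{sol} terminates; nor is any Clifford-style estimate actually carried out.

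The paper closes exactly this gap with an idea your proposal is missing: since $G$ is not supersolvable it has a maximal subgroup $M$ of non-prime index (\cite[9.4.4]{rob}), and $X=G/M_G$ is then a \emph{primitive} solvable group $X=V\rtimes K$ in which $V=F(X)$ is the unique minimal normal subgroup and is self-centralizing, with $|V|=p^n$, $n\geq 2$, and $K$ a quotient of $G/F(G)$. A Fitting-height analysis (using $\cp(G)\leq\cp(N)\cp(G/N)$, Proposition \ref{kgvcor}, and the absence of odd-order quotients from Proposition \ref{odd}) pins $G/F(G)$ down to one of $C_2$, $C_4$, $C_2\times C_2$, $S_3$ --- much sharper than $|G:F(G)|\leq 33$ --- and then $\alpha(X)$ is computed explicitly for each faithful irreducible $\mathbb{F}_p[K]$-module $V$ (split/non-split cases), leaving only $X\cong S_4$; Theorem \ref{sym} then finishes via $\alpha(G)=\alpha(G/M_G)$. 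Passing to the primitive quotient replaces your unbounded $C_G(V)$ by the self-centralizing $V=F(X)$ and converts Case 2 into a short list of explicit computations; without it, or a worked-out substitute, your Case 2 does not constitute a proof.
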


\begin{proof}
We prove that if $\alpha(G) \geq \alpha(S_4)$ and $G$ is not supersolvable then $G$ is isomorphic to ${C_2}^n \times S_4$ for some non-negative integer $n$. We have $\alpha(G) \geq \alpha(S_4) = 17/24 > 67/120$ so $G$ is solvable by Theorem \ref{sol}, so the Fitting subgroup $F$ of $G$ is non-trivial, and since $G$ is not supersolvable $G \neq F$. Since $2/3 < 17/24$, Proposition \ref{odd} implies that $G$ does not have non-trivial quotients of odd order. Also, since $17/24 > 1/2$ we have $\cp(G) \geq (2 \alpha(G)-1)^2 \geq 25/144$ by Lemma \ref{alphacp}.

\ 

In the following discussion we will use Proposition \ref{kgvcor}, the inequality $\cp(G) \leq \cp(N) \cdot \cp(G/N)$ for $N \unlhd G$ (see \cite[Lemma 1]{gurob}) and the obvious fact that the commuting probability is always at most $1$. Let $F_i$ be the subgroups defined above and let $h$ be the Fitting height of $G$. We distinguish three cases.

\ 

\begin{enumerate}
\item $h = 2$. In this case $G = F_2 > F_1 > \{1\}$. We have that $G/F_1$ is nilpotent so $25/144 \leq \cp(G) \leq 1/|G:F_1|$ so $|G:F_1| \leq 5$. However $|G:F_1| \not \in \{3,5\}$ because $G$ does not have non-trivial quotients of odd order, so $G/F_1$ is one of $C_2$, $C_4$ and $C_2 \times C_2$.

\ 

\item $h = 3$. In this case $G = F_3 > F_2 > F_1 > \{1\}$. We have $25/144 \leq \cp(G) \leq \cp(F_2) \leq 1/|F_2:F_1|$ and $25/144 \leq \cp(G) \leq \cp(G/F_1) \leq 1/|G:F_2|$ so $|F_2:F_1| \leq 5$ and $|G:F_2| \leq 5$. Also $G/F_1$ is not a group of prime power order (because it is not nilpotent) and $|G:F_2|$ is not $3$ or $5$ because $G$ does not have quotients of odd order. Therefore $G/F_1$ is a group of order $6$, $10$, $12$ or $20$, its Fitting subgroup has order at most $5$ and $\alpha(G/F_1) \geq \alpha(G) \geq 17/24$. We deduce $G/F_1 \cong S_3$ by \cite{gap}.

\ 

\item $h \geq 4$. In this case $G \geq F_4 > F_3 > F_2 > F_1 > \{1\}$. We have $$\frac{25}{144} \leq \cp(G) \leq \cp(F_2) \cdot \cp(F_4/F_2) \leq \frac{1}{|F_2:F_1|} \cdot \frac{1}{|F_4:F_3|}$$ so $|F_2:F_1| \cdot |F_4:F_3| \leq 5$ implying that $|F_2:F_1| = |F_4:F_3| = 2$. But then $F(F_3/F_1) = F_2/F_1 \subseteq Z(F_3/F_1)$ implying $F_3/F_1=F_2/F_1$ (because the Fitting subgroup contains its own centralizer), a contradiction.
\end{enumerate}

\ 

We deduce that $G/F$ is one of $C_2$, $C_4$, $C_2 \times C_2$ and $S_3$.

\ 

Since $G$ is not supersolvable there exists a maximal subgroup $M$ of $G$ whose index $|G:M|$ is not a prime number (see \cite[9.4.4]{rob}). Let $M_G$ the normal core of $M$ in $G$, that is, the intersection of the conjugates of $M$ in $G$. Let $X := G/M_G$, $K := M/M_G$, so that $|G:M|=|X:K|$. Then $\alpha(S_4) \leq \alpha(G) \leq \alpha(X)$. This implies that if $X \cong S_4$ then the result follows from Theorem \ref{sym}, so all we have to prove is that $X \cong S_4$. The subgroup $M/M_G$ of $X$ is maximal and it has trivial normal core, so $X$ is a primitive solvable group. We will make use of the known structural properties of primitive solvable groups, see for example \cite[Section 15 of Chapter A]{dt}. $X$ is a semidirect product $X = V \rtimes K$ with $V={C_p}^n$, $p$ a prime, $V$ is the unique minimal normal subgroup of $X$ and it equals the Fitting subgroup of $X$. Since $|V|=|X:K|$ is not a prime, $n \geq 2$, so $|X| > 6 \geq |G/F|$ hence $F \not \subseteq M_G$ so $FM_G/M_G$ is a non-trivial nilpotent normal subgroup of $X$ so it equals $V$, hence $$K \cong X/V = (G/M_G)/(FM_G/M_G) \cong G/FM_G$$ is a quotient of $G/F$ so $K$ is one of $C_2$, $C_4$, $C_2 \times C_2$ and $S_3$.

\ 

In what follows we will use the known representation theory of small groups over the field with $p$ elements. We will think of $V$ as a vector space of dimension $n$ over the field $\mathbb{F}_p$, irreducible when seen as a $\mathbb{F}_p[K]$-module.

\ 

Suppose $p$ divides $|K|$. Observe that being not supersolvable, $X$ is not a $2$-group, hence $K$ cannot be $C_2$, $C_4$ nor $C_2 \times C_2$, so $K \cong S_3$. The structure of the group algebras $\mathbb{F}_2[S_3]$ and $\mathbb{F}_3[S_3]$ implies that $n \geq 2$ forces $n=p=2$ and hence $X \cong S_4$.

\ 

Suppose now $p$ does not divide $|K|$, so that $|K|$ and $|V|$ are coprime.

\ 

Suppose $\mathbb{F}_p$ is a splitting field for $K$. Since $n \geq 2$, the only possibility is $K \cong S_3$, $n=2$, and the action of $K$ on $V$ defining the group structure of $X = V \rtimes K$ is the following: $K \cong S_3$ permutes the coordinates of the vectors in the fully deleted module $$V = \{(a,b,c) \in {\mathbb{F}_p}^3\ :\ a+b+c=0\}.$$ The elements of order $2$ in $X$ are of the form $(a,b,c)k$ with $k \in K$, $o(k)=2$ and the fixed coordinate is zero, so there are $3p$ of them. The elements of order $3$ in $X$ are of the form $vk$ with $v \in V$ arbitrary and  $k \in K$, $o(k)=3$, so there are $2p^2$ of them. The elements of order $p$ in $X$ are the non-trivial elements of $V$ (being $p$ coprime to $|K|=6$) so there are $p^2-1$ of them. The elements of order $2p$ in $X$ are of the form $vk$ with $v \in V$, $k \in K$, $o(k)=2$ and $o(vk) \neq 2$ so there are $3p^2-3p$ of them. The exponent of $X$ is $6p$ and $X$ has no elements of order $6$, $3p$ or $6p$. This implies that
\begin{align*}
c(X) & = \sum_{x \in X} \frac{1}{\varphi(o(x))} = 1+3p+\frac{2p^2}{2}+\frac{p^2-1}{p-1}+\frac{3p^2-3p}{p-1} = p^2+7p+2.
\end{align*}
Using $p \geq 5$ and $|X|=6p^2$ we deduce $\alpha(X) < 17/24$, a contradiction.

\ 

Suppose $\mathbb{F}_p$ is not a splitting field for $K$. Since $n \geq 2$, the only possibility is $K \cong C_4$, $n=2$, and the polynomial $t^2+1$ does not split modulo $p$, that is, $p \equiv 3 \mod 4$. Let $x$ be a generator of the cyclic group $K$. We may interpret $x$ as a matrix of order $4$, so that by irreducibility the minimal polynomial of $x$ is $t^2+1$ (the only irreducible factor of degree $2$ of $t^4-1$). This implies that $x^2=-1$. Choosing a nonzero vector $v_1 \in V$ and $v_2:={v_1}^x$, since $v_1,v_2$ are linearly independent (otherwise $v_1$ would be an eigenvector for $x$ contradicting irreducibility), and ${v_2}^x={v_1}^{x^2}=-v_1$, the matrix of $x$ in the base $\{v_1,v_2\}$ is $\left( \begin{array}{cc} 0 & 1 \\ -1 & 0 \end{array} \right)$ acting by right multiplication. $X = V \rtimes K$ has $p^2-1$ elements of order $p$ (the non-trivial elements of $V$, being $p$ coprime to $|K|=4$), $p^2$ elements of order $2$ (the elements of the form $vx^2$ with $v \in V$ arbitrary) and $2p^2$ elements of order $4$ (the elements of the form $vx$ or $vx^3$ with $v \in V$ arbitrary). The exponent of $X$ is $4p$ and there are no elements of order $2p$ or $4p$, therefore $$c(X) = \sum_{x \in X} \frac{1}{\varphi(o(x))} = 1+p^2+\frac{2p^2}{2}+\frac{p^2-1}{p-1} = 2p^2+p+2.$$Using $p \geq 3$ and $|X|=4p^2$ we deduce $\alpha(X) < 17/24$, a contradiction.
\end{proof}

\section{Acknowledgements}

We would like to thank the referee for carefully reading a previous version of this paper and having very useful suggestions.


\begin{thebibliography}{20}

\bibitem{bf} R. Brauer, K. A. Fowler; On groups of even order. Ann. of Math. (2) 62 (1955), 565--583.

\bibitem {dt} K. Doerk and T. Hawkes, Finite Soluble Groups, de Gruyter Expositions in Mathematics 4, de Gruyter (1992).

\bibitem{gap} The GAP Group, GAP -- Groups, Algorithms, and Programming, Version 4.8.

\bibitem{garpat} Garonzi M., Patassini M.; Inequalities detecting structural properties of a finite group. Comm. Algebra 45 (2017), no. 2, 677--687. 

\bibitem{gurob} Guralnick R. M., Robinson G. R.; On the commuting probability in finite groups. J. Algebra 300 (2006), no. 2, 509--528. 

\bibitem{knorr} Knorr R.; On the number of characters in a $p$-block of a $p$-solvable group. Illinois J. Math. 28 (1984) 181--210.

\bibitem{atlasas} D. Leemans, L. Vauthier; An atlas of abstract regular polytopes for small groups. Aequationes Math. 72 (2006), no. 3, 313--320.

\bibitem{rob} D. J. S. Robinson, A course in the theory of groups; Second edition; Graduate Texts in Mathematics, 80. Springer-Verlag, New York, 1996.

\bibitem{schur} Schur, J; On the representation of the symmetric and alternating groups by fractional linear substitutions.  Translated from the German [J. Reine Angew. Math. 139 (1911), 155--250] by Marc-Felix Otto. Internat. J. Theoret. Phys. 40 (2001), no. 1, 413--458.

\bibitem{wall} Wall, C. T. C.; On groups consisting mostly of involutions. Proc. Cambridge Philos. Soc. 67 1970 251--262.

\bibitem{tar} Tarnauceanu, M.; Finite groups with a certain number of cyclic subgroups. Amer. Math. Monthly 122 (2015), no. 3, 275--276.
\end{thebibliography}
\end{document}